\documentclass[11pt]{article}
\usepackage{amsmath,amssymb,amsthm}
\usepackage{hyperref} 
\usepackage{tikz-cd} 
\theoremstyle{definition} 
\newtheorem{definition}{Definition}[section]
\newtheorem{remark}[definition]{Remark}
\newtheorem{setting}[definition]{Setting}
\newtheorem{preliminary}[definition]{Preliminary}
\theoremstyle{plain}
\newtheorem{theorem}[definition]{Theorem}
\newtheorem{proposition}[definition]{Proposition}
\newtheorem{corollary}[definition]{Corollary}
\newtheorem{lemma}[definition]{Lemma}
\newcommand{\eq}[1]{\begin{equation}\label{#1}}
	\newcommand{\eeq}{\end{equation}}
\title{Linearisation problem under finite \'etale cover}
\author{Xiaojun Wu}
\date{\today} 
\begin{document} 
\def\A{\mathcal{A}}
\def\cI{\mathcal{I}}
\def\Z{\mathbb{Z}}
\def\Q{\mathbb{Q}}  \def\C{\mathbb{C}}
 \def\R{\mathbb{R}}
 \def\N{\mathbb{N}}
 \def\H{\mathbb{H}}
  \def\P{\mathbb{P}}
 \def\rC{\mathrm{C}}
  \def\d{\partial}
 \def\dbar{{\overline{\partial}}}
\def\dzbar{{\overline{dz}}}
\def \ddbar {\partial \overline{\partial}}
\def\cB{\mathcal{B}}
\def\cD{\mathcal{D}}  \def\cO{\mathcal{O}}
\def\cbarO{\overline{\mathcal{O}}}
\def\D{\mathcal{D}}
\def\cC{\mathcal{C}}
\def\cF{\mathcal{F}}
\def \rank{\mathrm{rank}}
\def \deg{\mathrm{deg}}
\def \tot{\mathrm{Tot}}
\def \id{\mathrm{id}}
\bibliographystyle{plain}
\def \End{\mathrm{End}}
\def \dim{\mathrm{dim}}
\def \div{\mathrm{div}}
\def \ker{\mathrm{Ker}}
\def \im{\mathrm{Im}}
\def \rC{\mathrm{Cone}}
\def \Sym{\mathrm{Sym}}
\def \Hom{\mathrm{Hom}}
\newcommand{\Ub}{\mathcal{U}}
\newcommand{\dcech}{\check{\delta}}
\newcommand{\dcechg}{\delta\!\!\!\check{\delta}}
\newcommand{\lc}{\mathcal{L}}
\newcommand{\ec}{\mathcal{E}}
\newcommand{\var}{\varphi}
\maketitle
\begin{abstract}
In this article, we study the (full or vertical) linearisation problem under a finite Galois étale cover.
As an application, we give sufficient conditions for full linearisation near Hopf manifolds and for vertical or full linearisation near hyperelliptic manifolds.
\end{abstract}
\medskip \noindent\textbf{2020 Mathematics Subject Classification.} Primary 32Q57, 37F50; Secondary 32L30, 32L10. \smallskip \noindent
\\
\textbf{Key words and phrases.} Neighborhood of a complex manifold, normal bundle, holomorphic linearization, resonances, small divisors
condition, holomorphic foliations.
\section{Introduction}
It is  a classical problem to classify neighborhoods $U$ of an embedded compact complex manifold $C$ into complex manifolds $M$ up to biholomorphism fixing $C$ pointwise. In particular, Grauert called ``Das Formale Prinzip''(e.g \cite{kosarew-crelle,Hi81,hwang-annals}) the following problem : Assume there is  a {\it formal equivalence} between a neighborhood $U$ in $M$ and a neighborhood $U'$ in $M'$, in both of which $C$ is holomorphically embedded. Does such a formal equivalence give  rise to a genuine holomorphic equivalence between the (possibly smaller) neighborhoods? Here differential geometry and curvature enter into the play. Indeed, if this normal bundle is {\it negative}, then Grauert~\cite{grauert-embed} and Hironaka-Rossi~\cite{hironaka-rossi} 
proved a rigidity statement: formally equivalent neighborhoods of $C$ in $M$ and $M'$ are actually biholomorphic. In the case the normal bundle is {\it positive}, Griffiths~\cite{griffiths-ext2} proved that there are only
finitely many obstructions to being formally equivalent to $U$. He then
proved, under some assumptions, that if the neighborhood $U'$ is formally
equivalent to $U$, then it is also biholomorphic to it. 

A holomorphic embedding of $C$ into $M$ gives rise to another natural embedding, namely the embedding of $C$ as the zero section in its normal bundle $N_{C/M}$. Equivalence of a neighborhood of $C$ in $M$ with a neighborhood of $C$ in $N_{C/M}$ can be seen as a type of ``linearization". In that case, we shall say that the neighborhood is ``fully linearizable". When the normal bundle is {\it flat},  dynamical systems methods are more appropriate. Indeed, Arnol'd~\cite{arnold-embed}(see also \cite[Chap.~5, sect.~27]{arnold-geometrical}) studied the
embedding of an elliptic curve into a complex surface when the normal bundle has
zero self-intersection number (i.e. {\it flat}). He showed that under a {\it small divisors condition}, the neighborhood is biholomorphic to a (unspecific) neighborhood of the zero section of the normal bundle $N_{C/M}$. This was generalized by Ilyashenko and Pyartly~\cite{ilyashenko-pyartly-embed} to direct product of 1 dimensional tori.

A somehow similar problem, following Ueda \cite{Ued82}, is the existence of a holomorphic foliation on a neighborhood of $C$, having $C$ as a leaf, which can be obtained through a ``vertical linearization" of neighborhoods. 

In recent years, there has been a renewal of interest in these questions (e.g. \cite{loray-moscou,stolo-koike}), in particular in the case of flat normal bundle \cite{GS21,koike-ueda,koike-ueda0, stolo-koike}. 
\paragraph{}
In this article, we study the (full or vertical) linearisation problem under a finite Galois étale cover of complex manifolds.
As an application, we give sufficient conditions for full linearisation near Hopf manifolds and for vertical or full linearisation near hyperelliptic manifolds.

The geometric setting is as follows.
Let $p: \hat{C} \to C$ be a finite étale Galois cover of an $n$-dimensional complex manifold $C$ with Galois group $G$.
Assume that $\hat{C},C$ are connected.
Let $M$ be a complex manifold into which $C$ is holomorphically embedded.
As we shall see below (Lemma \ref{GS24}), there exists a complex manifold $\hat{M}$ into which $\hat{C}$ is holomorphically embedded and a holomorphic map $\hat{M} \to M$ extending $p$.
In the following, we call $\hat{M}$ the induced neighborhood associated with the finite \'etale cover $p$.
We aim to study whether the germ of \(C\) in \(M\) can be linearised (fully or vertically) — that is, whether it is biholomorphic (fully or vertically) to a neighborhood in its normal bundle — via the corresponding problem for \(\hat{C} \subset \hat{M}\).
A sufficient condition is that the linearisation of $\hat{C}$ near $\hat{M}$ is $G$-equivariant, especially if this linearisation is unique.

In general, the first cohomology group provides the obstruction to formal linearisation, while the zeroth cohomology group controls its uniqueness, if a formal linearisation exists. In this article, under the additional assumption that the zeroth cohomology vanishes, we study the linearisation problem.
Note that the assumption on the vanishing of the zeroth cohomology is mild and is satisfied, for instance, if the normal bundle is non-resonant.

Our main result is the following.
\begin{theorem}
\label{ueda_quotient}
 Let $C$ be an $n$-dimensional connected complex manifold embedded in a complex manifold $M$ of dimension $n + d$. Suppose that the normal bundle of $C$ in $M$ admits transition functions that are locally constant Hermitian matrices.
Assume that there exists a connected finite étale Galois cover $p:\hat{C} \to C$ of complex manifolds such that the induced neighborhood of $\hat{C}$ in $\hat{M}$ associated with the finite \'etale cover $p$ is biholomorphic to a neighborhood of the zero section in the induced normal bundle $N_{\hat{C}/\hat{M}}$.
Assume that
$$H^0(\hat{C}, T_{\hat{C}}\otimes \Sym^l N^{*}_{\hat{C}/\hat{M}})=H^1(\hat{C}, T_{\hat{C}} \otimes \Sym^l N^{*}_{\hat{C}/\hat{M}})=0,$$
for all $l\geq 1$,
$$H^0(\hat{C}, N^{}_{\hat{C}/\hat{M}}\otimes \Sym^l N^{*}_{\hat{C}/\hat{M}})=H^1(\hat{C},N^{}_{\hat{C}/\hat{M}}\otimes \Sym^l N^{*}_{\hat{C}/\hat{M}})=0,$$
for all $l\geq 2$.

Then a neighborhood of $C$ in $M$ is biholomorphic to a neighborhood of the zero section in the normal bundle $N_{C/M}$.
\end{theorem}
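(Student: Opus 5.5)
The plan is to make the given linearisation of $\hat{C}\subset\hat{M}$ equivariant under the Galois group $G$, and then pass to the quotient. By Lemma \ref{GS24}, $G$ acts on $\hat{M}$ (possibly after shrinking to a $G$-invariant neighbourhood of $\hat{C}$) by deck transformations extending the action on $\hat{C}$, and $\hat{M}/G$ is a neighbourhood of $C$ in $M$. Likewise $G$ acts on $N_{\hat{C}/\hat{M}}=p^*N_{C/M}$ by bundle maps covering the action on $\hat{C}$, with quotient $N_{C/M}$; this action is linear on fibres. Let $\Phi:\hat{M}\to N_{\hat{C}/\hat{M}}$ be the given biholomorphism onto a neighbourhood of the zero section; we may normalise it so that it is the identity on $\hat{C}$ and its differential induces the identity on the normal bundle. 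For $g\in G$ put $\Phi_g=g^{-1}\circ\Phi\circ g$, where $g$ acts on the source by the deck action and on the target by the bundle action. Each $\Phi_g$ is again a linearisation with the same normalisation, so $\Psi_g=\Phi_g\circ\Phi^{-1}$ is a germ of biholomorphism of $N_{\hat{C}/\hat{M}}$ along the zero section that is tangent to the identity to first order.

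The key step is a uniqueness statement: every such germ $\Psi$ equals the identity. I would argue formally order by order. In the linear model, with locally constant Hermitian (hence flat, unitary) transition functions, write $\Psi$ in adapted coordinates $(z,v)$ with $v$ the fibre coordinates. Suppose $\Psi=\mathrm{id}+O(|v|^{l})$ for some $l$. The degree-$l$ term of the fibre component is then a global section of $N\otimes\Sym^l N^*$, which vanishes for $l\ge2$ by the $H^0$ hypothesis. The degree-$l$ term of the tangential component is a section of $T_{\hat{C}}\otimes\Sym^l N^*$, which vanishes for $l\ge1$. The flatness of the transition functions is exactly what makes these leading terms transform tensorially, with no correction coming from the derivative of the transition functions. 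Induction then gives $\Psi=\mathrm{id}$ formally.

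The main obstacle is upgrading formal identity to actual identity. I would use that $\Psi$ is holomorphic on a genuine neighbourhood: on a connected neighbourhood of the zero section in each chart $U\times\{|v|<\epsilon\}$, a holomorphic map whose Taylor expansion in $v$ at $v=0$ is the identity coincides with the identity by the identity theorem (expand in $v$ with coefficients holomorphic in $z$). Hence $\Psi_g=\mathrm{id}$, which means $\Phi$ is $G$-equivariant.

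Since $G$ acts freely on $\hat{M}$ and on $N_{\hat{C}/\hat{M}}$, the equivariant biholomorphism $\Phi$, restricted to $G$-invariant neighbourhoods, descends to a biholomorphism from $\hat{M}/G$, a neighbourhood of $C$ in $M$, onto a neighbourhood of the zero section in $N_{\hat{C}/\hat{M}}/G=N_{C/M}$. This proves the theorem. The $H^1$ hypotheses are not needed in this argument, since existence of the linearisation upstairs is assumed; they only guarantee that a formal linearisation exists.
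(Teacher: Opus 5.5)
Your argument is correct, and it reaches equivariance by a more direct route than the paper.

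\textbf{The paper's route.} The paper first pushes the cohomology vanishing down to $C$, via the trivial-character summand $\cO_C\subset p_*\cO_{\hat{C}}$ of Preliminary~\ref{mumford}. It then uses the $H^1$ vanishing with \cite{GS21} to build a formal linearisation of $(M,C)$ tangent to the identity. It lifts this to a formal linearisation of $(\hat{M},\hat{C})$, which is $G$-equivariant by construction. Only then does it use uniqueness upstairs (the $H^0$ hypotheses) to identify this lift with the given holomorphic linearisation, and it concludes by Proposition~\ref{inj}.

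\textbf{Your route.} You skip the existence step downstairs entirely. Conjugating the given linearisation by $g$ produces a normalised automorphism $\Phi_g\circ\Phi^{-1}$ of the linear model, and your order-by-order induction shows such automorphisms are trivial.

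\textbf{What each buys.} Your approach uses only the $H^0$ hypotheses upstairs. It therefore shows that the $H^1$ hypotheses, the splitting of $T_M|_C$ and the cohomology descent are superfluous for this statement, since the assumed holomorphic linearisation upstairs already provides the formal one. Your explicit normalisation of $\Phi$ also makes precise a point the paper leaves implicit. There the degree-one tangential term is removed by $H^0(\hat{C},T_{\hat{C}}\otimes N^*_{\hat{C}/\hat{M}})=0$. The implicit point is that uniqueness applies only once the given linearisation is tangent to the identity and preserves the splitting. The paper's route, in exchange, places the result inside the formal framework of \cite{GS21} and exhibits the formal linearisation of $(M,C)$ directly.

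\textbf{A minor correction.} Flatness of $t_{kj}$ is not what makes the leading terms tensorial. Once the degree-one tangential term is gone and both components are $\mathrm{id}+O(|v|^{l})$ with $l\ge 2$, any term involving $dt_{kj}$ is $O(|v|^{l+1})$. So your uniqueness step does not need the Hermitian hypothesis either.
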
 
As applications, we have the following consequences.
\begin{corollary}
\label{corA}
Let $C$ be a hyperelliptic manifold (that is, a quotient
$C = \hat{C} / G$ of a complex torus $\hat{C}$ by the free action of a finite group $G$ containing no translations) of dimension $n$,
embedded in a complex manifold $M$ of dimension $n + d$. Suppose that the normal bundle of $C$ in $M$ admits transition functions that are locally constant Hermitian matrices.
Let $p: \hat{C} \to C$ be a finite étale Galois cover such that $\hat{C}$ is a complex torus, and assume that the induced normal bundle $N_{\hat{C}/\hat{M}}$ satisfies the Diophantine condition (cf. Definition \ref{dioph_full}).

Then a neighborhood of $C$ in $M$ is biholomorphic to a neighborhood of the zero section in the normal bundle $N_{C/M}$.
\end{corollary}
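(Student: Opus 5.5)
The plan is to derive Corollary \ref{corA} from Theorem \ref{ueda_quotient}, applied to the given cover $p:\hat{C}\to C$. Since $G$ acts freely, $p$ is a connected finite étale Galois cover with group $G$. There are then three things to check: the induced normal bundle $N_{\hat{C}/\hat{M}}=p^*N_{C/M}$ again has locally constant Hermitian (unitary) transition functions; the induced neighborhood $\hat{C}\subset\hat{M}$ is fully linearisable; and the eight cohomology groups in Theorem \ref{ueda_quotient} vanish. The first point is immediate, because pulling back by a local biholomorphism preserves the cocycle of locally constant unitary matrices. So $N_{\hat{C}/\hat{M}}$ is a unitary flat bundle on the torus $\hat{C}$. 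After conjugating the commuting unitary monodromy of $\pi_1(\hat{C})\cong\Z^{2n}$, it splits as a direct sum of flat line bundles $L_1\oplus\dots\oplus L_d$ with unitary characters.

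For the cohomology on the torus, note that $T_{\hat{C}}$ is holomorphically trivial, so $T_{\hat{C}}\otimes \Sym^l N^*$ is a direct sum of flat unitary line bundles $L^{-Q}=L_1^{-q_1}\cdots L_d^{-q_d}$ with $|Q|=l$. Similarly $N\otimes\Sym^l N^*$ is a sum of the bundles $L_i\otimes L^{-Q}$ with $|Q|=l\ge 2$. On a complex torus, a flat unitary line bundle $E$ satisfies $H^0(E)=H^1(E)=0$ exactly when $E$ is nontrivial, which I would verify via Fourier series or the Dolbeault cohomology of flat bundles. By the definition of the Diophantine condition (Definition \ref{dioph_full}), which I expect to include non-resonance, the bundles $L^{-Q}$ ($|Q|\ge1$) and $L_i L^{-Q}$ ($|Q|\ge2$) are nontrivial. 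Hence all required $H^0$ and $H^1$ vanish.

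The main obstacle is to show that $\hat{C}$ has a neighborhood in $\hat{M}$ biholomorphic to a neighborhood of the zero section of $N_{\hat{C}/\hat{M}}$. This requires the convergence theorem for tori with Diophantine flat unitary normal bundle, generalising Arnol'd and Ilyashenko--Pyartly, which I expect is stated earlier (or cited from \cite{GS21}) together with Definition \ref{dioph_full}. The plan is to invoke that result directly on $\hat{C}\subset\hat{M}$. If that result only covers the case where the normal bundle is diagonal with unitary characters, I would first use the splitting above to reduce to it. If it gives linearisation only in the transversally-foliated sense, I would combine it with the vanishing of $H^1(T_{\hat{C}}\otimes\Sym^lN^*)$ to upgrade to full linearisation. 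Once these hypotheses are in place, Theorem \ref{ueda_quotient} yields that a neighborhood of $C$ in $M$ is biholomorphic to a neighborhood of the zero section in $N_{C/M}$.
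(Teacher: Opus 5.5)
Your proposal is correct and follows the paper's proof. As in the paper, you apply Theorem \ref{ueda_quotient}: the full linearisation of $\hat C\subset\hat M$ comes directly from the torus theorem \cite[Theorem 1.1]{stolo-gong-tori}, and the cohomology vanishing comes from splitting $N_{\hat C/\hat M}$ into flat unitary line bundles, using the Diophantine condition to see that the relevant summands are nontrivial, and the vanishing of $H^0,H^1$ of nontrivial flat line bundles on a torus (Lemma \ref{cohom_torus}).

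Two small remarks. First, Definition \ref{dioph_full} only involves $|Q|\ge 2$, so nontriviality of $L^{-Q}$ for $|Q|=1$ needs one extra line: if $L^{-Q}$ were trivial then $L^{-2Q}$ would be trivial, which violates (\ref{dh_full}). Second, your fallback of upgrading a vertical linearisation via $H^1$-vanishing would only give a formal equivalence. It is not needed, because the cited theorem already gives full linearisation.
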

\begin{corollary}
\label{corB}
Let $C$ be a Hopf manifold of generic or classical type (see, for instance, Definitions \ref{generic} and \ref{classical}) of dimension $n$
embedded in a complex
manifold $M$ of dimension $n + 1$.  Suppose that the normal bundle of $C$ in $M$ is flat.
Let $p:\hat{C} \to C$ be a finite \'etale Galois cover such that $\hat{C}$ is a primary Hopf surface. Assume that the induced normal bundle $N_{\hat{C}/\hat{M}}$ is a non-Hermitian flat line bundle satisfying the irrationality condition in Definition \ref{irrational}.
Then a neighborhood of $C$ in $M$ is biholomorphic to
a neighborhood of the zero section in the normal bundle.
\end{corollary}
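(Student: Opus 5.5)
The plan is to prove Corollary \ref{corB} by verifying the hypotheses of Theorem \ref{ueda_quotient} for the cover $p:\hat{C}\to C$, with $d=1$. The first thing to settle is the Hermitian hypothesis on $N_{C/M}$. In codimension one a flat line bundle has locally constant transition functions in $\C^*$, and these are $1\times 1$ matrices. I will read ``locally constant Hermitian'' in the theorem as locally constant, since the proof only uses it to split $N$ and its pullback compatibly with the flat structure. So the flatness assumed in Corollary \ref{corB} is what the theorem needs. The pullback $N_{\hat{C}/\hat{M}}=p^*N_{C/M}$ is then a flat line bundle $L_\lambda$ on the primary Hopf manifold $\hat{C}=(\C^n\setminus\{0\})/\langle f\rangle$, determined by a character $\lambda\in\C^*$ of $\pi_1(\hat{C})\cong\Z$ (taking $n\ge 3$ for simplicity, or $\Z$ modulo torsion otherwise). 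The requirement that this bundle be non-Hermitian means $|\lambda|\neq 1$.

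The core of the argument is the cohomology vanishing. Pulling back to $W=\C^n\setminus\{0\}$ turns $H^q(\hat{C},E\otimes L_\lambda^{-l})$, for $E=T_{\hat{C}}$ or $E=L_\lambda$, into the group cohomology of $\Z=\langle f\rangle$ with coefficients in $H^0(W,\cdot)$. For $n\ge 2$, Hartogs gives $H^0(W,\cO)=\cO(\C^n)$, and for $n\ge 3$ we also have $H^1(W,\cO)=0$. The Hopf surface case $n=2$ needs the additional term $H^1(W,\cO)$, which one handles using Laurent expansions; alternatively I would cite the standard computations for Hopf manifolds presumably done earlier via Definitions \ref{generic}, \ref{classical}. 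In either case, $H^0$ consists of the invariants and $H^1$ of the coinvariants of the twisted operator
\[
T_l:\; g\mapsto \lambda^{l}\,(Df)^{-1}\, g\circ f
\]
on vector fields for the tangent bundle, or $T_l:h\mapsto \lambda^{l-1}h\circ f$ on functions for the normal direction. This means we must show that $T_l-\id$ is bijective on $\cO(\C^n)\otimes E_0$. Take $f$ to be linear diagonal, $f(z)=(\alpha_1z_1,\dots,\alpha_nz_n)$ with $0<|\alpha_i|<1$, in the generic case, or its Poincaré--Dulac normal form in the classical case. The operator then acts on monomials $z^Q\partial_{z_j}$ by multiplication by $\lambda^l\alpha^Q\alpha_j^{-1}$, and on $z^Q$ by $\lambda^{l-1}\alpha^Q$. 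Injectivity and surjectivity at the formal level require these multipliers to differ from $1$ for all $Q\in\N^n$, $j$, and the relevant ranges of $l$. This is exactly the non-resonance I expect the irrationality condition of Definition \ref{irrational} to encode.

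Convergence should be straightforward, and this is where $|\lambda|\neq 1$ is used. Since $|\alpha^Q|\to 0$ as $|Q|\to\infty$, only finitely many pairs $(Q,l)$ can have multiplier close to $1$. Indeed, once $|\alpha^Q|$ is small compared with $|\lambda|^{-l}$, or once $|\lambda|^l$ is large (if $|\lambda|>1$), the factor $|1-\text{multiplier}|^{-1}$ is bounded, so there are no genuine small divisors. Solving $(T_l-\id)g=h$ coefficientwise therefore preserves convergence on all of $\C^n$. In the classical, non-diagonal case, the nilpotent part contributes a triangular perturbation, which I would handle by a filtration argument on degrees.

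The main obstacle is the remaining hypothesis of Theorem \ref{ueda_quotient}: that $\hat{C}\subset\hat{M}$ is itself fully linearisable. Here I would invoke the earlier result, presumably stated in the paper for primary Hopf manifolds with non-Hermitian flat normal bundle under the irrationality condition, giving full linearisation of $\hat{C}$ in $\hat{M}$. The vanishing just established for all $l$ supports the formal part of that result, and the absence of small divisors gives convergence. With all hypotheses in place, Theorem \ref{ueda_quotient} yields that a neighborhood of $C$ in $M$ is biholomorphic to a neighborhood of the zero section of $N_{C/M}$.
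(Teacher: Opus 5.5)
Your plan matches the paper's proof. The paper also just applies Theorem \ref{ueda_quotient} to $p$. It takes the cohomology vanishing from \cite[Theorems 1 and 2]{Mall91}, for which your group-cohomology computation on $\C^n\setminus\{0\}$ with $n\ge 3$ is a valid substitute for each fixed $l$. It takes the linearisation of $\hat{C}\subset\hat{M}$ from \cite[Theorem 3.3]{SW25}. It tacitly relaxes the ``Hermitian'' hypothesis to local constancy, which you do explicitly.

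Some side remarks need correcting, though none is load-bearing. The classical type is already the homothety $z\mapsto\alpha z$, so there is no nilpotent part. Your claim that only finitely many pairs $(Q,l)$ give multipliers near $1$ fails for $|\lambda|>1$: for $\alpha=1/2$ and $\lambda=3$, the multipliers $3^l2^{1-|Q|}$ accumulate at $1$. Finally, Definition \ref{irrational} as written constrains only $\lambda$ itself, not $\lambda^l$ for $l\ge 2$; for instance $\lambda^2=\alpha$ is allowed in the classical case. So the non-resonance you ``expect'' must be read into the irrationality assumption for all powers, a point the paper's one-line appeal to \cite{Mall91} also passes over.
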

Note that in our applications the dynamical condition implies the simultaneous vanishing of the zeroth and first cohomology groups.
We also remark that the Diophantine condition is preserved under finite étale covers.
\paragraph{}
For vertical linearisation, the situation is more complicated.
In general, the bundle-valued holomorphic forms defining a regular holomorphic foliation with $\hat{C}$ as a leaf are not unique.
Our observation is that such a holomorphic form is unique if the terms of degree at least two in its Taylor expansion along the normal directions vanish.
\begin{theorem}
\label{ueda-quotient2}
Let $C$ be an $n$-dimensional connected complex manifold embedded in a complex manifold $M$ of dimension $n+1$.
Assume that $H^0(C, \cO_C)=\C$.
Suppose that the normal \emph{vector bundle} $N_{C/M}$ of $C$ in $M$ admits transition functions that are locally constant Hermitian matrices.
Assume that there exists a connected finite étale Galois cover
$p: \hat{C} \to C$ of complex manifolds such that the induced neighborhood of $\hat{C}$ in $\hat{M}$
associated with the finite \'etale cover $p$ admits vertical linearisation (see Setting \ref{set}).
Assume that the short exact sequence
$$
0 \;\to\; N^*_{\hat{C}/\hat{M}} \;\to\; \Omega^1_{\hat{M}}|_{\hat{C}}
   \;\to\; \Omega^1_{\hat{C}} \;\to\; 0 ,
$$ splits and that
$$
H^0(\hat{C},N^{-k}_{\hat{C}/\hat{M}})=H^1(\hat{C},N^{-k}_{\hat{C}/\hat{M}})=0,
$$$$
H^0(\hat{C},\Omega^1_{\hat{C}} \otimes N^{-k}_{\hat{C}/\hat{M}})=H^1(\hat{C},\Omega^1_{\hat{C}} \otimes N^{-k}_{\hat{C}/\hat{M}})=0,
$$
for all $k \geq 1$.
Assume furthermore that
$$
H^0\!\bigl(\hat{C},\, \Omega^1_{\hat{C}} \otimes N_{\hat{C}/\hat{M}} \bigr) \;=\;
0.
$$
Then a neighborhood of $C$ in $M$ admits a regular holomorphic foliation with $C$ as a leaf.
\end{theorem}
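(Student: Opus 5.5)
The strategy is to produce on $\hat{M}$ a foliation-defining form that is \emph{canonical}: unique under a normalisation that the Galois group $G$ preserves. Then it is automatically $G$-invariant and descends to $M$.

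By the vertical linearisation hypothesis (Setting \ref{set}), a neighbourhood of $\hat{C}$ in $\hat{M}$ carries a regular holomorphic foliation $\hat{\mathcal{F}}$ having $\hat{C}$ as a leaf. It is defined by a nowhere vanishing holomorphic $1$-form $\omega$ with values in a line bundle $L$ extending $N_{\hat{C}/\hat{M}}$. Since $N_{C/M}$ has locally constant unitary transition functions, so does its pullback $N_{\hat{C}/\hat{M}}=p^*N_{C/M}$. Following Ueda, we may therefore choose local defining functions $w_j$ of $\hat{C}$ with $w_j = t_{jk} w_k$ on overlaps, where the $t_{jk}$ are constants. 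This realises $L$ as the flat extension, and $G$ acts on it compatibly with the $G$-action on $\hat{M}$, since the constant cocycle is pulled back from $C$.

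We normalise $\omega$ as $\omega = dw_j + \sum_{k\ge 1} \omega_{j,k}$, where $\omega_{j,k}$ is homogeneous of degree $k$ in $w_j$. A choice of splitting of the conormal sequence fixes the meaning of this Taylor expansion. The degree-$1$ part is a section of $\Omega^1_{\hat{C}}\otimes N^{*}\otimes N = \Omega^1_{\hat{C}}$, suitably paired. The hypothesis $H^0(\hat{C},\Omega^1_{\hat{C}}\otimes N_{\hat{C}/\hat{M}})=0$, together with the vanishing for $k\ge1$, is used to kill the ambiguity in low degrees.

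The key step is \emph{uniqueness}. Suppose $\omega$ and $\omega'$ both define regular foliations tangent to $\hat{C}$ and both have vanishing Taylor terms of order $\ge 2$ in the normal direction, as the observation before the statement suggests. I would show that $\omega' = u\,\omega$ with $u$ a nowhere vanishing function, and that $u$ and the two foliations agree. Proceed by induction on order: the first discrepancy, in degree $k$, gives a $\bar\partial$-closed cocycle. It lies in $H^0(\hat{C},N^{-k})$ for the function part, which is forced to vanish; otherwise it lies in $H^0(\hat{C},\Omega^1_{\hat{C}}\otimes N^{-k})$ or $H^0(\hat{C},\Omega^1_{\hat{C}}\otimes N)$, which vanish by hypothesis. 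The $H^1$-vanishings $H^1(\hat{C},N^{-k})=H^1(\hat{C},\Omega^1_{\hat{C}}\otimes N^{-k})=0$ play the existence role. Starting from the given $\omega$, one can inductively change coordinates $w_j \mapsto w_j + O(w_j^{k})$ and multiply by units so as to eliminate the terms of degree $\ge 2$ order by order. This produces a normalised form. Convergence is not an issue here: the foliation already exists holomorphically, and the normalisation only needs to be checked to be unique. The normalised foliation is in fact determined by the leaf through each point, which is a holomorphic object.

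Next we descend. For $g\in G$, $g^*\omega$ again defines a regular foliation with $\hat{C}$ as a leaf, and it satisfies the same normalisation, because the $w_j$ and the splitting can be chosen $G$-equivariantly (average the splitting over $G$). By uniqueness, $g^*\hat{\mathcal{F}}=\hat{\mathcal{F}}$, so $\hat{\mathcal{F}}$ is $G$-invariant. Since $\hat{M}\to M$ is a Galois étale cover near $C$ (Lemma \ref{GS24}), $\hat{\mathcal{F}}$ descends to a regular holomorphic foliation on a neighbourhood of $C$ having $C$ as a leaf. The assumption $H^0(C,\cO_C)=\C$ is used to make the unit $u$ relating $g^*\omega$ and $\omega$ constant to leading order, so that the $G$-action on the normalised form is by constants.

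I expect the main obstacle to be the uniqueness and normalisation step. One must check precisely that the ambiguity consists exactly of cocycles in the listed $H^0$ groups, including the mixed term controlled by $\Omega^1_{\hat{C}}\otimes N$. One must also check that the normalisation can be achieved holomorphically and not merely formally. For the latter, I would try to argue that the normalising coordinate changes are determined by the given convergent foliation through its holonomy, which is trivial here since the foliation is defined by a closed-type form $dw_j + \dots$ with flat unitary $t_{jk}$.
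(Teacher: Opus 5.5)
\textbf{Gap: the uniqueness of normalised forms fails in normal degree one.} A degree-$k$ discrepancy between two $N$-valued $1$-forms lies in
$H^0\bigl(\hat C,\Omega^1_{\hat M}|_{\hat C}\otimes N^{1-k}_{\hat C/\hat M}\bigr)=H^0(\hat C,N^{-k}_{\hat C/\hat M})\oplus H^0(\hat C,\Omega^1_{\hat C}\otimes N^{1-k}_{\hat C/\hat M})$.
For $k\ge 2$ both summands vanish by hypothesis. For $k=0$ only the normalised constant survives. For $k=1$, however, the horizontal summand is $H^0(\hat C,\Omega^1_{\hat C})$. This group is not assumed to vanish, and on a torus it is $\C^n$. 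Your list of groups omits exactly this one, even though you had already identified the degree-one term as $\Omega^1_{\hat C}$-valued.

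Concretely, take the linear model over a torus with non-torsion unitary $N$, and let $\beta$ be a nonzero constant holomorphic $1$-form. Then $dw$ and $dw+w\beta$ are both nowhere vanishing and integrable (since $d\beta=0$). Both have $\hat C$ as a leaf, they agree along $\hat C$, and neither has terms of order $\ge 2$. Yet they define different foliations: their linear holonomies differ by the exponentials of the periods of $\beta$. This matches the paper's count $\dim H^0(\hat M,\Omega^1_{\hat M}\otimes N_{\cF})=h^{1,0}(\hat C)+1$. So your normalisation does not force $g^*\omega=\omega$.

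Separately, your reason why $g^*\omega$ is normalised at all is unsupported: you say the $w_j$ can be chosen $G$-equivariantly. Averaging the splitting does not give this. A $G$-equivariant choice of linearising $w_j$ is already the conclusion, since such $w_j$ would descend to $M$ directly.

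\textbf{Comparison with the paper and a repair.} The paper argues instead that Ueda's form $\sigma=\{dv_j\}$ spans a $G$-stable line, so $g^*\sigma=\chi(g)\sigma$ for some character $\chi$. It then kills $\chi$ by restricting to $\hat C$, where $\sigma|_{\hat C}$ is the constant $1\in H^0(\hat C,\cO_{\hat C})=\C$. Either way, the $w\beta$ directions must be excluded.

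The missing idea is to impose uniqueness on the defining section rather than on the form. The collection $w=\{w_j\}$ is a holomorphic section of the flat extension $\mathcal{N}$ of $N_{\hat C/\hat M}$, which is pulled back from $M$ and carries a $G$-action.
\begin{enumerate}
\item Since $H^0(\hat C,\cO_{\hat C})=\C$, $dw|_{\hat C}$ differs by a constant from the canonical $G$-equivariant isomorphism $N_{\hat C/\hat M}\cong p^*N_{C/M}$. Hence the transported section $g_*w$ also vanishes on $\hat C$ with the same normal derivative.
\item Then $g_*w-w$ is a section of $\cI_{\hat C}^2\mathcal{N}$. Its successive leading terms lie in $H^0(\hat C,N^{1-k}_{\hat C/\hat M})=0$ for $k\ge 2$.
\item Krull's theorem and the identity theorem give $g_*w=w$. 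No splitting is needed, and there is no convergence issue, because both sections are already holomorphic.
\end{enumerate}
At the level of forms, it is the flat closedness $d_\nabla\sigma=0$ that rules out $w\beta$; you mention this only in passing.

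More simply still, the condition $w_k=t_{kj}w_j$ is linear. So $\frac{1}{|G|}\sum_{g}g_*w$ is a $G$-invariant defining section with the same derivative along $\hat C$, and this uses none of the vanishing hypotheses. The invariant section descends to defining functions $s_j$ of $C$ with $s_k=t_{kj}s_j$. Their level sets give the regular foliation with $C$ as a leaf.
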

The higher-codimension case of Theorem \ref{ueda-quotient2} is somewhat more difficult to state, but we can give an answer in the case of hyperelliptic manifolds in arbitrary codimension.
\begin{corollary}
\label{corC}
Let $C$ be a hyperelliptic manifold of dimension $n$,
embedded in a complex manifold $M$ of dimension $n + d$. Suppose that the normal bundle of $C$ in $M$ admits transition functions that are locally constant Hermitian matrices.
Let $p: \hat{C} \to C$ be a finite étale Galois cover such that $\hat{C}$ is a complex torus, and assume that the induced normal bundle $N_{\hat{C}/\hat{M}}$ satisfies the strongly vertically Diophantine condition (cf. Definition \ref{dioph}).

Then a neighborhood of $C$ in $M$ admits a regular holomorphic foliation with $C$ as a leaf.
\end{corollary}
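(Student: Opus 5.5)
We need to prove Corollary C: hyperelliptic C, torus cover, strongly vertically Diophantine condition, conclusion: regular foliation with C as leaf.

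The plan is to reduce Corollary \ref{corC} to an equivariant, unique vertical linearisation on the torus cover and then descend, in the same spirit as Theorem \ref{ueda-quotient2} but with a codimension-free argument adapted to tori. Let $\hat{M}$ be the induced neighborhood of $\hat{C}$ given by Lemma \ref{GS24}. The Galois group $G$ then acts on the germ $(\hat{M},\hat{C})$ by deck transformations, and $M=\hat{M}/G$ as germs. Since $\hat{C}$ is a complex torus, $T_{\hat{C}}$ and $\Omega^1_{\hat{C}}$ are trivial. The normal bundle $N_{\hat{C}/\hat{M}}=p^*N_{C/M}$ is unitary flat, so it splits as a direct sum of unitary flat line bundles $L_1,\dots,L_d$: a unitary flat bundle on a torus comes from a representation of the abelian group $\pi_1$, which diagonalises.

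First I apply the vertical linearisation result on tori, namely the Diophantine/Ueda-type theorem the paper relies on in Definition \ref{dioph}. The strongly vertically Diophantine condition will supply a holomorphic $N_{\hat{C}/\hat{M}}$-valued $1$-form $\hat\omega$ on a neighborhood of $\hat{C}$. This form is integrable, restricts to $\hat{C}$ as the canonical projection $\Omega^1_{\hat{M}}|_{\hat{C}}\to N^*$ dual, and its kernel defines a regular foliation with $\hat{C}$ as a leaf. Equivalently, there is a submersion onto a neighborhood of the zero section in the flat bundle, compatible with the flat structure. The key normalisation I impose, following the observation before Theorem \ref{ueda-quotient2}, is that the Taylor expansion of $\hat\omega$ in the normal variables has no terms of degree $\geq 2$ beyond those forced by the linear flat coordinates. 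Concretely, I write $\hat\omega = dw + \sum_{|Q|\ge1} (\text{coefficients})\,w^Q$ in adapted flat coordinates $w$. I then show that the strong Diophantine condition makes the solution of the cohomological equations unique at each order.

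The order-by-order equations have coefficients in $H^0$ and $H^1$ of
$$
L_j\otimes L^{-Q} \quad\text{and}\quad \Omega^1_{\hat{C}}\otimes L_j\otimes L^{-Q}=(L_j\otimes L^{-Q})^{\oplus n}, \qquad |Q|\ge 1.
$$
For a nontrivial unitary flat line bundle on a torus, both $H^0$ and $H^1$ vanish. The strong vertical condition should guarantee exactly that $L_j\otimes L^{-Q}$ is nontrivial for all these $Q$, quantitatively so, which also gives convergence. Vanishing of $H^0$ gives uniqueness of the normalised form up to the choice of the linear part. The linear part is itself fixed by requiring $\hat\omega|_{\hat{C}}$ to be the canonical map, which is well defined because on a torus $\Omega^1_{\hat{M}}|_{\hat{C}}$ splits: the extension class lies in $H^1(\hat{C},N^*\otimes T_{\hat{C}})=H^1(N^*)^{\oplus n}$, and this vanishes when no $L_j$ is trivial. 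If some $L_j$ is trivial, I will need to handle it separately, presumably via the condition excluding it.

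Uniqueness then implies $G$-equivariance. For $g\in G$, the pullback $g^*\hat\omega$, twisted by the induced bundle map on $N_{\hat{C}/\hat{M}}$, is another normalised solution, because $G$ preserves the flat Hermitian structure and hence the normalisation. So $g^*\hat\omega=\hat\omega$, the foliation $\ker\hat\omega$ is $G$-invariant, and it descends to a regular holomorphic foliation on a neighborhood of $C$ in $M$ with $C$ as a leaf; regularity and the leaf property are local and preserved by the étale quotient.

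The main obstacle is the uniqueness step, which involves two difficulties. First, I must formulate the normalisation so that it is both $G$-invariant and achievable; the $G$-action may permute the $L_j$ and act on the torus by nontranslation automorphisms, so coordinates must be chosen intrinsically, e.g. as a normal form in the flat connection. Second, I must check that the strongly vertically Diophantine condition kills every $H^0$ appearing, not only the $H^1$ needed for existence. I would first try writing the defining data as a section of $N\otimes\Sym^{\ge2}N^*$-type jets and using the flat connection to define the normal form canonically.
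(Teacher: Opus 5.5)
\textbf{Verdict: there is a genuine gap.} Your existence input (the vertical linearisation theorem for tori under the strongly vertically Diophantine condition), the splitting of $N_{\hat C/\hat M}$ into unitary flat line bundles, and the final descent step all match the paper. The problem is the uniqueness step, on which your $G$-equivariance rests, and as stated it fails.

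The strongly vertically Diophantine condition only constrains multi-indices with $|Q|>1$. It therefore cannot make $L_j\otimes L^{-Q}$ nontrivial for all $|Q|\ge 1$: for $Q=e_j$ this bundle is $\mathcal{O}_{\hat C}$. Look at the coefficient of $w^{Q}\,dz$, which takes values in $\Omega^1_{\hat C}\otimes L_j\otimes L^{-Q}$. For $Q=e_j$ you are left with $H^0(\hat C,\Omega^1_{\hat C})\cong\mathbb{C}^n$ for every $j$. These order-one horizontal terms are not fixed by prescribing $\hat\omega|_{\hat C}$, which only sees the $w^0$ coefficients. Nor are they fixed by discarding terms of degree $\ge 2$.

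The ambiguity is real. On the linear model, take any constant holomorphic $1$-form $\theta_j$ on the torus and replace $dw_j$ by $dw_j+w_j\theta_j$.
\begin{itemize}
\item The result is still a well-defined $N_{\hat C/\hat M}$-valued form in your normal form, with the canonical restriction to $\hat C$.
\item It is still integrable, since $d(dw_j+w_j\theta_j)=(dw_j+w_j\theta_j)\wedge\theta_j$.
\item Its foliation is different (locally $w_j e^{\int\theta_j}=\mathrm{const}$), although $\hat C$ is still a leaf.
\end{itemize}
The paper records the same phenomenon in codimension one, where it finds $\dim H^0(\hat M,\Omega^1_{\hat M}\otimes N_{\mathcal F})=h^{1,0}(\hat C)+1$. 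Consequently $g^*\hat\omega$ may be one of these other normalised forms, and ``uniqueness $\Rightarrow g^*\hat\omega=\hat\omega$'' does not go through. Separately, your normalisation is written in the coordinates $w$ produced by the linearisation, which $G$ need not preserve, so its $G$-invariance is also unproved, as you suspected.

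The paper does not claim uniqueness of the whole extension.
\begin{itemize}
\item It uses the cohomology vanishing only for $\Sym^k N^*_{\hat C/\hat M}$ with $k\ge 2$.
\item It works with the $G$-stable kernel space (denoted $N$ in the paper) of sections killed by the map to the higher infinitesimal neighbourhoods.
\item It asserts that restriction identifies this space $G$-equivariantly with $H^0(\hat C,N^*_{\hat C/\hat M}\otimes N_{\hat C/\hat M})$.
\item It then takes the wedge product of Ueda's $d$ forms. This is a $\det N_{\hat C/\hat M}$-valued $d$-form whose restriction is the constant $1\in H^0(\hat C,\mathcal{O}_{\hat C})$, on which $G$ acts trivially.
\end{itemize}
To repair your argument, you need a $G$-invariant way of fixing the order-one horizontal terms (the $H^0(\hat C,\Omega^1_{\hat C})^{\oplus d}$ freedom) that preserves integrability. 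The Diophantine condition by itself does not provide this.
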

The paper is organised as follows. In Section~2, we introduce the general setting and recall several basic notions and results that will be used throughout the paper. Section~3 is devoted to the study of the full linearisation problem under a finite Galois étale cover, and we prove Theorem~\ref{ueda_quotient} together with its corollaries. In Subsection~4.1, we collect some background material on holomorphic foliations that will be used in the later arguments. Finally, Subsection~4.2 is devoted to the study of the vertical linearisation problem under a finite Galois étale cover, and we prove Theorem~\ref{ueda-quotient2} and its corollaries.
\paragraph{}
\textbf{Acknowledgements.} I would like to thank my previous postdoctoral advisor, Professor Laurent Stolovitch, for reading the draft of this work and providing several important suggestions that substantially improved it. This research was supported by the JSPS Postdoctoral Fellowships for Research in Japan (Standard). I am also grateful to Osaka Metropolitan University and the Osaka Central Advanced Mathematical Institute (OCAMI) for providing an excellent research environment.
\section{Terminology}
In this section, we recall the terminology and basic definitions from \cite{GS21} used in this article.

We begin by recalling the following result from \cite{stolo-gong-tori}. As it will be used repeatedly, we provide a brief sketch of the proof for completeness.
\begin{lemma} \label{GS24} Let \( C \) be a complex manifold and let \( p : \hat{C} \to C \) be a holomorphic covering. Suppose that \( (M, C) \) is a holomorphic neighborhood of \( C \). Then there exists  a holomorphic map \( \hat{M} \to M \) defined in a neighborhood of \( \hat{C} \), sending \( \hat{C} \) into \( C \) and restricting to \( p \) on \( \hat{C} \). Moreover, this map is a holomorphic covering. We call $\hat{M}$ the induced neighborhood associated with the finite \'etale cover $p$. \end{lemma}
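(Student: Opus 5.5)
The plan is to build $\hat{M}$ as a quotient of the universal covering of a suitable neighborhood of $C$, using the fact that this neighborhood retracts onto $C$. First I choose an open neighborhood $U$ of $C$ in $M$ that deformation retracts onto $C$. Such a $U$ exists because $C$ is a closed submanifold of $M$. I take a tubular neighborhood: fix a Hermitian metric on $M$ and use the exponential map of the normal bundle, or any smooth identification of a neighborhood with a disc bundle in $N_{C/M}$. This gives a smooth retraction $r: U \to C$ that is homotopic to the identity of $U$ relative to $C$. Consequently the inclusion $\iota: C \hookrightarrow U$ induces an isomorphism $\iota_*: \pi_1(C) \to \pi_1(U)$, with $r_*$ as its inverse. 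When $C$ is not compact, the tubular neighborhood may have to be chosen with varying radius, but it still exists.

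Next I transport the covering. Connected coverings of $C$ correspond to conjugacy classes of subgroups $H = p_*\pi_1(\hat{C}) \subset \pi_1(C)$. Let $\hat{U} \to U$ be the covering of $U$ associated with $\iota_*(H) \subset \pi_1(U)$. Equivalently, and more concretely, I set $\hat{U} := r^*\hat{C} = \{(x,\hat{c}) \in U \times \hat{C} : r(x) = p(\hat{c})\}$ with projection $q(x,\hat{c}) = x$. Since $p$ is a covering, $q$ is a topological covering map. I then give $\hat{U}$ the unique complex structure making $q$ a local biholomorphism, which is possible because coverings of complex manifolds inherit complex structures by pulling back charts through local sections. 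The map $\hat{c} \mapsto (p(\hat{c}), \hat{c})$ embeds $\hat{C}$ into $\hat{U}$. This embedding is holomorphic, because locally it is the composition of $p$ with a local inverse of $q$, and $p$ is a local biholomorphism. Its image is exactly $q^{-1}(C)$, since $r|_C = \id$. Setting $\hat{M} := \hat{U}$ gives the holomorphic covering $q$ extending $p$.

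Finally I check the remaining properties. $\hat{C}$ sits inside $\hat{M}$ as a closed complex submanifold, and $q|_{\hat{C}} = p$. If $p$ is finite Galois with group $G$, then $G$ acts on $\hat{U}$ via its action on the second factor. This action is by deck transformations of $q$, which are holomorphic since they are deck transformations of a local biholomorphism, so $q$ is also Galois with group $G$. Connectedness of $\hat{U}$ follows because $\hat{U}$ retracts onto $\hat{C}$, via $(x,\hat{c}) \mapsto$ the lift of the homotopy.

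The main obstacle is only a technical one: the retraction $r$ is merely smooth, so the fibre-product definition is topological, and holomorphicity must be obtained afterwards by pulling back the complex structure of $U$ through $q$. No holomorphic retraction is needed, because covering spaces are determined by $\pi_1$ alone. The only genuine input is the homotopy equivalence $C \simeq U$, which ensures that every covering of $C$ extends uniquely to $U$.
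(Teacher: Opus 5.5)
Your proposal is correct and is essentially the paper's argument. It uses the same three steps: a smooth retraction $r$ from the tubular neighbourhood theorem, the fibre product $\hat{M}=M\times_C\hat{C}$ (your $r^*\hat{C}$), and the complex structure pulled back along the covering map.

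The one real variation is that you transport holomorphic charts through local inverses of $q$. This makes integrability automatic and avoids the paper's appeal to the Nijenhuis tensor and Newlander--Nirenberg. You also check explicitly that $\hat{C}\cong q^{-1}(C)$ is a closed submanifold, that $\hat{M}$ is connected, and that $q$ is Galois with group $G$; the paper only asserts these facts.
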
 \begin{proof} Since \(C\) is a complex submanifold of \(M\), it is also a smooth submanifold of the underlying real manifold. By the tubular neighborhood theorem, after possibly shrinking \(M\) around \(C\), there exists a smooth retraction \( \pi : M \to C \) such that \( \pi|_C = \mathrm{id}_C \). We define the smooth fiber product \[ \begin{tikzcd} \hat{M}:=M \times_C \hat{C} \arrow[r] \arrow[d, "\hat{p}"] & \hat{C} \arrow[d, "p"] \\ M \arrow[r, "\pi"] & C. \end{tikzcd} \] Then \(\hat{M}\) is a smooth manifold and \(\hat{p} : \hat{M} \to M\) is a smooth covering whose restriction to \(\hat{C}\) coincides with \(p\). We now endow \(\hat{M}\) with a complex structure. Since \(\hat{p}\) is a local diffeomorphism, the differential induces an isomorphism of vector bundles \( T_{\hat{M}} \simeq \hat{p}^* T_M \). Let \(J\) be the complex structure on \(M\). Pulling back \(J\) gives an almost complex structure \[ \hat{J} := \hat{p}^* J \] on \(\hat{M}\). We claim that \(\hat{J}\) is integrable. Indeed, the Nijenhuis tensor behaves functorially under pullback by local diffeomorphisms. Since the Nijenhuis tensor of \(J\) vanishes on \(M\), the Nijenhuis tensor of \(\hat{J}\) also vanishes. Therefore, by the Newlander--Nirenberg theorem (cf. e.g. \cite[Chapter VIII, Theorem (11.8)]{agbook}), \(\hat{J}\) is integrable and defines a complex structure on \(\hat{M}\). By construction, \(\hat{p}\) is holomorphic. Since it is locally biholomorphic, it is a holomorphic covering. \end{proof}
The following remark will be used frequently. Unless otherwise specified, we always endow the finite \'etale cover of the ambient manifold with this holomorphic group action.
\begin{remark} 
\label{rem1}
If \(p : \hat{C} \to C\) is a Galois covering of complex manifolds with Galois group \(G\), then \(\hat{p} : \hat{M} \to M\) is also a Galois covering of complex manifolds with the same Galois group \(G\). \end{remark}
\begin{setting}
\label{set}
A neighborhood $V$ of an embedded complex  manifold  $C_n$ in    $M_{n+d}$ has local holomorphic charts $(h_j,v_j)=\Phi_j$ mapping $V_j$ onto   $\hat V_j$ in $\C^{n+d}$ with $n=\dim \; C$. Here $\cup V_j$ is a neighborhood of $C$ and  $U_j:=V_j\cap C$ is defined by $\{v_j=0\}$. The classification of the germs of neighborhoods of $C$ is then the classification of transition functions $\Phi_{kj}:=\Phi_k \circ \Phi_{j}^{-1}$ under
holomorphic conjugacy $F_k^{-1} \circ \Phi_{kj} \circ F_j$.
To such an embedding, one can associate the normal bundle $N_{C/M}$
of $C$ in $M$, which has
the transition matrices $g_{kj}(p)$, $p\in U_k\cap U_j$. To this embedding one can associate another natural embedding, namely the embedding of $C$ as the zero section of $N_{C/M}$.
Under a mild assumption, this embedding $(N_{C/M}, C)$ naturally provides a first-order approximation of $(M,C)$.
Let $\var_j=\Phi_j|_{U_j}$ and let $\var_{kj}=\var_k\circ \var_j^{-1}$  be the transition functions of $C$.
To have a neighborhood of $C$ in $M$ equivalent to a neighborhood of the zero
section in $N_{C/M}$ is equivalent to 
seeking
 $F_j$ such that $\hat\Phi_{kj}=F_k^{-1}\circ \Phi_{kj} \circ F_j$ are of the form $N_{kj}(h_j,v_j)=(\var_{kj}(h_j),t_{kj}(h_j)v_j)$
with $t_{kj}(h_j)=g_{kj}$, the latter being  %
regarded as the transition functions of a neighborhood of the zero
section of $N_{C/M}$. 
We call this process the ``full linearization" of the neighborhood.

In Ueda's theory (\cite{Ued82}), we are interested in the existence of a regular holomorphic foliation in a germ of a neighborhood of $C$ in a complex manifold, with $C$ as a compact leaf.  We refer to it as a ``horizontal foliation".
The
 ``horizontal foliation" will be obtained as a consequence of a ``vertical linearization" of the neighborhood, which amounts to 
 seeking  $F_j$ such that $\hat\Phi_{kj}=(\var_{kj}(h_j)+\hat\phi_{kj}^h(h_j,v_j), t_{kj}(h_j)v_j)$.
In the following, we refer to the problem of finding such a ``horizontal foliation'' as Ueda's problem.
\end{setting}

 \begin{definition}(\cite[Definition 2.1]{GS21})
 \label{def2.1}
 We call $\hat M_C$ an (admissible and splitting)   formal neighborhood of $C$ if   there are holomorphic coordinate charts $
 \var_j$ on $U_j$ where $\{U_j\}$ is a covering of $C$ and there are formal power series
$$
(z_j,w_j)=\hat\Phi_j(p,w):=(
\var_j(p),
t_j(p)w)+\sum_{|Q|\geq2}\Phi_{j,Q}(p)w^Q,
$$
where $\Phi_{j,Q}$ are holomorphic functions in $U_j$ and each $t_j$ is an invertible holomorphic  $d\times d$ matrix on $U_j$.  Note that the formal transition functions $\hat\Phi_{kj}=\hat \Phi_k \circ \hat\Phi_j^{-1}$ have the form
\eq{}\nonumber
\hat\Phi_{kj}(z_j,w_j)=(
\var_{kj}(z_j),t_{kj}(z_j)w_j)+\sum_{|Q|>1}
\hat \Phi_{kj,Q}(z_j)w_j^Q, \quad z_j\in
\var_j(U_j\cap U_k).
\eeq
\begin{enumerate}
\item
When all $\Phi_j$ are
holomorphic, the formal neighborhood $\hat M_C$ is called
the germ of a  (holomorphic)
 neighborhood of $C$.
\item $\hat M_C$ is called a 
 linear neighborhood of $C$ if additionally
\eq{initial000}   
\hat\Phi_{kj}(z_j,v_j)=(
\var_{kj}(z_j),t_{kj}(z_j)v_j)
\eeq
and each
 $t_{kj}$ is
 an invertible holomorphic matrix in $U_k\cap U_j$. The terminology is meaningful since the $\hat\Phi_{kj}$ can be realized as the transition functions of a holomorphic vector bundle over $C$,  namely the normal bundle of $C$ in $M$.
 \end{enumerate}
 \end{definition}
\begin{definition}(\cite[Definition 2.5]{GS21})
We say that a formal  neighborhood  $\{\Phi_{kj}\}$ of $C$ is
  equivalent to a neighborhood $\{\hat\Phi_{kj}\}$ of $C$ in $M$ via a formal mapping  $F$
that is tangent to the identity and preserves the splitting of $T_M|_C$, if there are formal maps $F_j(z_j)=(z_j,w_j)+\sum_{|Q|>1} F_{j,Q}(z_j)w_j^Q$ such that $F_{j,Q}(z_j)$ are holomorphic functions in $  U_j$ and as power series in $w_j$
$$
F_k \circ \hat\Phi_{kj}(z_j,w_j)=\Phi_{kj} \circ F_j(z_j,w_j).
$$
 We take $F=\hat\Phi_j^{-1} \circ F_j \circ \Phi_j$, which is well-defined
 when $\Phi_{kj}=\Phi_k \circ \Phi_j^{-1}$ and $\hat\Phi_{kj}=\hat\Phi_k \circ \hat\Phi_j^{-1}$.
\end{definition} 
An equivalent definition can be given as follows.
\begin{definition}
\label{formal-neigh}
Let $C$ be a complex submanifold of a complex manifold $M$, with inclusion $i: C \hookrightarrow M$.
For each $k \ge 0$, let $\mathcal{I}_C$ denote the ideal sheaf of $C$ in $\mathcal{O}_M$. The \emph{$k$-th infinitesimal neighborhood} of $C$ in $M$ is defined as the ringed space
\[
(C, \mathcal{O}_M / \mathcal{I}_C^{k+1}).
\]

The \emph{formal neighborhood} of $C$ in $M$ is defined as the formal scheme
\[
\hat{M}_C := \varprojlim_{k \ge 0} (C, \mathcal{O}_M / \mathcal{I}_C^{k+1}),
\]
where the limit is taken in the category of ringed spaces.

Equivalently, the structure sheaf of the formal neighborhood is the completion of $\mathcal{O}_M$ along $C$:
\[
\mathcal{O}_{\hat{M}_C} := \varprojlim_{k \ge 0} \mathcal{O}_M / \mathcal{I}_C^{k+1}.
\]
\end{definition}
\begin{definition}
\label{germ}
Let \( M \) be a complex manifold and \( C \subset M \) a complex submanifold with inclusion $i: C \hookrightarrow M$. A \emph{germ of holomorphic neighborhood of \( C \)} is defined as follows. Two pairs \( (M_1, C) \) and \( (M_2, C) \), where \( C \subset M_i \) is a complex submanifold, are said to be \emph{equivalent as neighborhoods of \( C \)} if there exist open neighborhoods \( U_i \subset M_i \) of \( C \) respectively and a biholomorphism \( \Phi : U_1 \longrightarrow U_2 \) such that \( \Phi|_{C} = \mathrm{id}_C . \) The \emph{germ of holomorphic neighborhood of \( C \) in $M$} is the equivalence class of such pairs under this relation containing $(M,C)$. 
Equivalently, it is the inductive limit of all open neighborhoods of \( C \) in \( M \), taken in the category of holomorphic ringed spaces. 
\end{definition}
\begin{proposition} \label{inj} Let \( (M_1, C_1) \), \( (M_2, C_2) \) be germs of holomorphic neighborhoods. Then there is a canonical injective map \[ \Hom((M_1, C_1), (M_2, C_2)) \to \Hom((\hat{M_1}_{C_1},C_1),(\hat{M_2}_{C_2},C_2)). \]
Here
$\Hom((M_1,C_1),(M_2,C_2))$ denotes the set of holomorphic morphisms of germs from $(M_1,C_1)$ to $(M_2,C_2)$. Namely, an element is represented by a holomorphic map $f : U_1 \to U_2$, where $U_i$ is a sufficiently small neighborhood of $C_i$ in $M_i$, such that $f(C_1)\subset C_2$, modulo shrinking $U_1,U_2$.
Accordingly, $\Hom((\hat{M_1}_{C_1},C_1),(\hat{M_2}_{C_2},C_2))$ denotes the set of morphisms of formal neighborhoods, namely morphisms of locally ringed spaces compatible with the formal structures.
 \end{proposition}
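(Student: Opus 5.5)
The plan is to build the map by passing to jets along $C_1$ and taking the inverse limit, and then to prove injectivity with the identity theorem. A germ $f:U_1\to U_2$ with $f(C_1)\subset C_2$ satisfies $f^*\cI_{C_2}\subset \cI_{C_1}$, because pulling back a function that vanishes on $C_2$ gives a function that vanishes on $C_1$. Hence for every $k$ we get $f^*\cI_{C_2}^{k+1}\subset\cI_{C_1}^{k+1}$. So $f^\sharp:f^{-1}\cO_{M_2}\to\cO_{M_1}$ descends to ring morphisms
$$f^{-1}\bigl(\cO_{M_2}/\cI_{C_2}^{k+1}\bigr)\to \cO_{M_1}/\cI_{C_1}^{k+1}.$$
These are compatible with the truncation maps. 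Passing to the inverse limit, and using that the topological map is $f|_{C_1}:C_1\to C_2$, we obtain a morphism of locally ringed spaces $\hat f:(\hat{M_1}_{C_1},C_1)\to(\hat{M_2}_{C_2},C_2)$ that is continuous for the adic topologies. The construction is local on $C_1$, so it does not change when $U_1,U_2$ are shrunk. It is therefore well defined on germs and canonical, and it is functorial for composition.

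For injectivity, suppose two germs $f,g$ give the same $\hat f=\hat g$. First, the underlying continuous maps agree, so $f|_{C_1}=g|_{C_1}$. Fix $x\in C_1$ and choose local holomorphic coordinates $(y_1,\dots,y_m)$ on $M_2$ near $f(x)=g(x)$. The equality $\hat f=\hat g$ says that $f^*y_i-g^*y_i\in\cI_{C_1}^{k+1}$ near $x$ for every $k$. In adapted coordinates $(z,w)$ near $x$ with $C_1=\{w=0\}$, this means that the holomorphic function $f^*y_i-g^*y_i$ has vanishing Taylor expansion in $w$ at every order, with coefficients holomorphic in $z$. In particular its full Taylor series at $x$ vanishes. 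By the identity theorem, $f^*y_i=g^*y_i$ on a neighborhood of $x$, so $f=g$ near $x$.

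It remains to pass from this local statement to a whole neighborhood. The set where $f$ and $g$ agree near each point is open in the domain of definition. Take a neighborhood $U_1$ of $C_1$ on which both maps are defined, and shrink it so that each of its connected components meets $C_1$, for instance a tubular neighborhood. Then $f=g$ on an open set meeting every component, and analytic continuation gives $f=g$ on all of $U_1$. So the germs coincide.

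The main subtle point is this last passage from local agreement along $C_1$ to equality of germs. It requires shrinking representatives so that every component meets $C_1$, which is harmless for germs. A second point to check is that the morphism of formal neighborhoods really determines the images of the coordinate functions $y_i$ modulo every power of $\cI_{C_1}$, and not merely along $C_1$. This holds because a morphism of locally ringed spaces over $f|_{C_1}$ is determined by its comorphism on stalks of the completed structure sheaves.
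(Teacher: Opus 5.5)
Your proposal is correct and follows essentially the same route as the paper. The comorphism descends through $f^*\cI_{C_2}^{k+1}\subset\cI_{C_1}^{k+1}$ and passes to the inverse limit. Injectivity comes from $f^*y_i-g^*y_i\in\bigcap_k\cI_{C_1,x}^{k}=0$, which you justify via the vanishing Taylor series at $x$ and the identity theorem instead of the paper's appeal to Krull's intersection theorem. Your final analytic-continuation step is harmless but unnecessary: agreement near each point of $C_1$ already gives agreement on a neighborhood of $C_1$, which is all that equality of germs requires.
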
 \begin{proof} Let \(F \in \Hom((M_1, C_1), (M_2, C_2))\). Let \(f := F|_{C_1} : C_1 \to C_2\). Since \(F(C_1) \subset C_2\), we have \[ F^*(\mathcal I_{C_2}) \subset \mathcal I_{C_1}. \] Hence for every \(k \ge 0\), \[ F^*(\mathcal I_{C_2}^{k+1}) \subset \mathcal I_{C_1}^{k+1}. \] Therefore, for every \(p \in C_1\), the pullback induces a morphism of local rings \[ F^* : \mathcal O_{M_2,F(p)} / \mathcal I_{C_2,F(p)}^{k+1} \to \mathcal O_{M_1,p} / \mathcal I_{C_1,p}^{k+1}. \] Passing to the inverse limit, we obtain \[ \varprojlim_k \mathcal O_{M_2,F(p)} / \mathcal I_{C_2,F(p)}^{k+1} \to \varprojlim_k \mathcal O_{M_1,p} / \mathcal I_{C_1,p}^{k+1}. \] These maps are compatible with restriction and define a morphism of sheaves \[ f^{-1}\hat{\mathcal O}_{M_2,C_2} \to \hat{\mathcal O}_{M_1,C_1}. \] Thus we obtain a morphism of formal neighborhoods \[ \hat{F} \in \Hom((\hat{M_1}_{C_1},C_1),(\hat{M_2}_{C_2},C_2)). \] 
We now prove that this map is injective.
 Let \(F_1, F_2\) be two morphisms inducing the same formal morphism. Then they induce the same morphism on the zeroth infinitesimal neighborhood, hence $ F_1|_{C_1} = F_2|_{C_1}. $ Let \(p \in C_1\). For any germ \(f\) of holomorphic function on \(M_2\) at the point \(F_1(p)=F_2(p)\), the equality of the formal pullbacks implies \[ F_1^*(f) \equiv F_2^*(f) \quad \text{mod } \mathcal I_{C_1,p}^{k+1} \] for all \(k\). Hence \[ F_1^*(f) - F_2^*(f) \in \bigcap_{k \ge 1} \mathcal I_{C_1,p}^k. \] Since the local ring is Noetherian, Krull's intersection theorem (cf. e.g. \cite[Chapter IX, Lemma (1.3)]{agbook}) gives $ \bigcap_{k \ge 1} \mathcal I_{C_1,p}^k = 0. $ Therefore \(F_1^*(f) = F_2^*(f)\) for every germ \(f\) of holomorphic function on \(M_2\) at the point \(F_1(p)=F_2(p)\). Let \((z_1,\dots,z_n)\) be a system of local holomorphic coordinates of \(M_2\) centered at this point. Then for each \(i\) we have \[ F_1^*(z_i) = F_2^*(z_i) \] as germs of holomorphic functions at \(p\). This means that the coordinate expressions of the two maps coincide near \(p\). Hence the two holomorphic maps $ F_1, F_2 : (M_1,p) \to (M_2,F_1(p)) $ coincide as germs. Since this holds for every \(p \in C_1\), we conclude that \(F_1\) and \(F_2\) coincide as germs of holomorphic maps along \(C_1\). This proves the injectivity. ` \end{proof}
\begin{remark}
In general, the map 
$$\Hom((M_1, C_1), (M_2, C_2)) \to \Hom((\hat{M_1}_{C_1},C_1),(\hat{M_2}_{C_2},C_2))$$ 
may fail to be surjective due to the small divisor phenomenon.
\end{remark}
\section{Full linearisation}
To begin, we study the behavior of cohomology under a finite étale Galois cover. 
For the convenience of a general audience, we recall the relevant construction.
We refer to the book \cite{Mum70} for more details.
\begin{preliminary}
\label{mumford}
Let $p:\hat{C} \to C$ be a finite étale Galois cover of an $n$-dimensional complex manifold $C$ with Galois group $G$.

For any complex character $\chi \in \hat{G}$ (that is, a one-dimensional complex representation of \(G\)), define
\[
L_\chi := \{\alpha \in p_* \mathcal{O}_{\hat{C}} \mid \forall g \in G, \, g \cdot \alpha = \chi(g) \alpha \}.
\]

By Maschke's theorem (cf. e.g. \cite[Theorem 2]{Ser77}), every representation of the finite group $G$ is a direct sum of irreducible complex representations.
One can check that the completely reducible decomposition of the representation $G \to p_* \mathcal{O}_{\hat{C}}(U)$ for any open set $U \subset C$ defines a direct sum decomposition of the sheaf $p_* \mathcal{O}_{\hat{C}}$.
As this decomposition is defined by holomorphic equations, $L_\chi$ is a coherent subsheaf of $p_* \mathcal{O}_{\hat{C}}$.

If $G$ is Abelian, we thus have a decomposition into holomorphic line bundles:
\[
p_* \mathcal{O}_{\hat{C}} = \bigoplus_{\chi \in \hat{G}} L_\chi,
\]
(see e.g., \cite[Page 72]{Mum70}).
Note that
the component corresponding to the trivial character, $L_1$, is always $\mathcal{O}_C$ (even if $G$ is not Abelian) (see \cite[Corollary, Page 72]{Mum70}).
The Abelian assumption is used in the literature solely to ensure that every irreducible representation is one-dimensional. 
When $G$ is Abelian, the $L_\chi$'s are line bundles, which makes it easier to compute their cohomologies.
However, the component corresponding to the trivial character is always one-dimensional, independently of whether the group is Abelian. This is sufficient for our purposes.

Let $E$ be a vector bundle over $C$. Then the Leray spectral sequence together with the projection formula implies, for all $i$,
\[
H^i(\hat{C}, p^* E) = H^i(C, E \otimes_{\mathcal{O}_C} p_* \mathcal{O}_{\hat{C}}).
\]

In particular, if
\[
H^i(\hat{C}, p^* E) = 0,
\]
then
\[
H^i(C, E) = 0
\]
by considering the component corresponding to the trivial character (that is $\cO_C \subset p_* \cO_{\hat{C}}$).


In the following, we will only use this last conclusion, without assuming that the Galois group is Abelian. We will see the examples when $G$ is not necessarily Abelian (see Remark \ref{hyperell}).
\end{preliminary}
\paragraph{}
Let us study the full linearisation problem.

\textit{Proof of Theorem \ref{ueda_quotient}:}
Note that the vanishing of cohomology on $\hat{C}$ implies the corresponding vanishing on $C$ as discussed in Preliminary \ref{mumford}.
We also note that the Hermitian flatness of \( N^*_{C/M} \) implies the Hermitian flatness of \( N^*_{\hat{C}/\hat{M}} \). Indeed, the differentials of \( p \) and $\hat{p}$ induce isomorphisms \( T_{\hat{C}} \to p^* T_C \) and \( T_{\hat{M}} \to \hat{p}^* T_M \). It follows that \( p^* N^*_{C/M} = N^*_{\hat{C}/\hat{M}} \). From the representation-theoretic viewpoint, the normal bundle \( N^*_{\hat{C}/\hat{M}} \) corresponds to the representation \( \pi_1(\hat{C}) \to \pi_1(C) \to GL(d,\C) \), where \( \pi_1(C) \to GL(d,\C) \) is the representation associated with \( N^*_{C/M} \). Therefore, Hermitian flatness is preserved under pullback.

The vanishing of
$H^1(C, T_{C}\otimes  N^{*}_{{C}/{M}})$
implies a holomorphic splitting$$T_M|_C \simeq T_C \oplus N_{C/M}.$$
The cohomology vanishing conditions further imply the existence of a unique formal holomorphic isomorphism
$$\Phi: (M,C) \to (N_{C/M},C)$$
which is tangent to the identity and preserves the splitting of $T_M|_C$ (by \cite[Lemma 2.10, Proposition 2.14]{GS21}). More precisely, the vanishing of $H^1$ ensures the existence of $\Phi$ \cite[Lemma 2.10]{GS21}, while the vanishing of $H^0$ guarantees its uniqueness \cite[Lemma 2.10, Proposition 2.14]{GS21}.

Denote
$$G:=Gal(\hat{C}/C).$$
The conclusion follows if we can show that the biholomorphism $\Psi$ (provided by the assumptions) between a neighborhood of $\hat{C}$ in $\hat{M}$ and a neighborhood of the zero section in the induced normal bundle $N_{\hat{C}/\hat{M}}$ is $G$-equivariant.

However, $\Phi$ induces a $G$-equivariant formal isomorphism between a neighborhood of $\hat{C}$ in $\hat{M}$ and a neighborhood of the zero section in $N_{\hat{C}/\hat{M}}$. 
More precisely, by Remark \ref{rem1}, $(\hat{M}, \hat{C}) \to (M,C)$ is a Galois cover.
By Proposition \ref{inj}, it induces a Galois cover between a formal neighborhood of $\hat{C}$ in $\hat{M}$ and a formal neighborhood of $C$ in $M$. 
Similarly, we have a Galois cover between a formal neighborhood of the zero section in $N_{\hat{C}/\hat{M}}$ and a formal neighborhood of the zero section in $N_{C/M}$. 
For any point $x \in \hat{C}$, we have isomorphisms between the following: stalk of formal neighborhood of $\hat{C}$ in $\hat{M}$ at $x$,
stalk of formal neighborhood of $C$ in $M$ at $p(x)$,
stalk of formal neighborhood of $C$ in $N_{C/M}$ at $p(x)$,
stalk of formal neighborhood of $\hat{C}$ in $N_{\hat{C}/\hat{M}}$ at $x$.
These natural identifications induce a $G$-equivariant formal isomorphism between a neighborhood of $\hat{C}$ in $\hat{M}$ and a neighborhood of the zero section in $N_{\hat{C}/\hat{M}}$. 

By uniqueness of formal isomorphism, this induced formal isomorphism coincides with $\Psi$, which implies that $\Psi$ is $G$-equivariant by Proposition \ref{inj}.
\qed
\paragraph{}
Since the linearization problem under suitable Diophantine conditions has recently been solved for complex tori in \cite{stolo-gong-tori} and for Hopf manifolds in \cite{SW25}, while the vertical linearization problem under suitable strongly vertically Diophantine conditions has recently been solved for complex tori in \cite{SW24}, we apply Theorem \ref{ueda_quotient} to finite \'etale quotients of these two families of manifolds.

In the following, we discuss these different families of manifolds separately.

For the convenience of the reader, we recall the (strongly vertically) Diophantine condition used in \cite{stolo-gong-tori} and \cite{SW24}. 

Let $C=\C^n/\Lambda$ be a complex torus with lattice \[ \Lambda=\sum_{i=1}^{2n}\Z e_i, \] where $e_1,\ldots,e_n$ form the standard basis of $\C^n$.
Define, for each \(1 \leq j \leq n\),
\[
e_{n+j} =: \tau_j = (\tau_{j1}, \ldots, \tau_{jn}) \in \mathbb{C}^n.
\]
Consider the covering 
$$\pi \colon \tilde{C}:=\C^{n}/{\sum_{i=1}^{n}\Z e_{i}}\simeq (\C^*)^n \to C.$$ Let $(M,C)$ be a neighborhood of $C$, and assume that the normal bundle $N_{C/M}$ admits transition functions given by locally constant Hermitian matrices. Then, by \cite[Proposition 4.3]{stolo-gong-tori}, the germ $(M,C)$ is holomorphically equivalent to the quotient of an open neighborhood of $\tilde{C}$ in $\widetilde{N}_{\tilde{C}/\tilde{M}}$ by the Deck transformation group of $\tilde{C}$. Here $\widetilde{N}_{\tilde{C}/\tilde{M}}$ denotes the induced neighborhood of $\tilde{C}$ obtained by pulling back $(N_{C/M},C)$ via $\pi$.
The deck transformations of $\widetilde{N}_{\tilde{C}/\tilde{M}}$ are generated by $n$ biholomorphisms $\hat \tau_1,\cdots,\hat \tau_n$ that preserve $\tilde{C}$.
\begin{equation}
\hat \tau_j (h,v)= (T_j h, M_j v), M_j := \mathrm{diag}(\mu_{j,1}, \cdots , \mu_{j,d})
\end{equation}
with $(h,v) \in (\C^*)^n \times \C^d $  the total space of $\widetilde{N}_{\tilde{C}/\tilde{M}}$, $T_j$ being defined by :
\begin{equation}
h=(e^{2 \pi \sqrt{-1} z_1}, \cdots, e^{2 \pi \sqrt{-1} z_n}), T_j:=\mathrm{diag}(\lambda_{j,1}, \cdots , \lambda_{j,d}), \lambda_{j,k}:=e^{2 \pi \sqrt{-1} \tau_{jk}}.
\end{equation} 
Now let us recall the following Diophantine condition introduced in \cite{stolo-gong-tori}.
\begin{definition}
(\cite[Definition 4.5]{stolo-gong-tori})
\label{dioph_full}
	The pullback 
	normal bundle $\widetilde{N}_{\tilde{C}/\tilde{M}}$  is said to be  {Diophantine} if for all $(Q,P)\in \mathbb{N}^d\times \Z^{n}$,   $|Q|>1$ and all $l\in \{1,\ldots, n\}$,  $j\in  \{1,\ldots ,d\}$, $i \in \{1,\ldots, n\}$,
	\begin{equation}
		\max_{l \in \{1,\ldots, n\}}  \left |\lambda_l^P 
		\mu_{l}^Q-\lambda_{{l},i}\right |   >  \frac{D}{(|P|+|Q|)^{\tau}}, \label{dh_full}
	\end{equation}
	\begin{equation}
		\max_{l \in \{1,\ldots, n\}}  \left |\lambda_l^P 
		\mu_{l}^Q-\mu_{{l},j}\right |   >  \frac{D}{(|P|+|Q|)^{\tau}}\label{dv_full}
	\end{equation}
	for some $D>0$, $\tau>0$ (independent of $P,Q$).
\end{definition}
We will also use the following variant, called the strongly vertically Diophantine condition, introduced in \cite{SW24}.
\begin{definition}
(\cite[Definition 2.3]{SW24})
\label{dioph}
	The pullback 
	normal bundle $\widetilde{N}_{\tilde{C}/\tilde{M}}$  is said to be  {strongly vertically Diophantine} if for all $(Q,P)\in \mathbb{N}^d\times \Z^{n}$,   $|Q|>1$ and all   $j=1,\ldots ,d$, $l\in \{1,\ldots, n\}$
	\begin{equation}
		\left |\lambda_l^P 
		\mu_{l}^Q-\mu_{{l},j}\right |   >  \frac{D}{(|P|+|Q|)^{\tau}}\label{dv}
	\end{equation}
	for some $D>0$, $\tau>0$ (independent of $P,Q$).
\end{definition}

\paragraph{}
\textit{Proof of Corollary \ref{corA}:}
Let us show that the Diophantine condition \ref{dioph_full} and the following Lemma \ref{cohom_torus} implies that
$$H^0(\hat{C}, T_{\hat{C}}\otimes \Sym^l N^{*}_{\hat{C}/\hat{M}})=H^1(\hat{C}, T_{\hat{C}} \otimes \Sym^l N^{*}_{\hat{C}/\hat{M}})=0,$$
for all $l\geq 1$,
$$H^0(\hat{C}, N^{}_{\hat{C}/\hat{M}}\otimes \Sym^l N^{*}_{\hat{C}/\hat{M}})=H^1(\hat{C},N^{}_{\hat{C}/\hat{M}}\otimes \Sym^l N^{*}_{\hat{C}/\hat{M}})=0$$
for all $l\geq 2$.
Indeed, since the fundamental group of $\hat{C}$ is free Abelian of finite rank, its normal bundle as Hermitian flat vector bundles are direct sum of Hermitian flat line bundles.
The Diophantine condition \ref{dioph_full} implies that every direct summand of $\Sym^l N^{*}_{\hat{C}/\hat{M}}$ for all $l \geq 1$, as well as every direct summand of $N_{\hat{C}/\hat{M}} \otimes \Sym^l N^{*}_{\hat{C}/\hat{M}}$ for all $l \geq 2$, is non-torsion.
The conclusion follows from Theorem \ref{ueda_quotient} and \cite[Theorem 1.1]{stolo-gong-tori}.
\qed
\paragraph{}
The following lemma is well known to experts. Since we have not found a precise reference in the literature, we include a proof for the reader’s convenience. \begin{lemma} \label{cohom_torus} Let $T$ be a complex torus of dimension $n$, and let $L$ be a non-torsion Hermitian flat line bundle on $T$, that is, $L^{\otimes m}$ is non-trivial for every integer $m \geq 1$. Then \[ H^0(T,L)=H^1(T,L)=0. \] \end{lemma}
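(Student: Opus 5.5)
We show the two vanishings separately. The $H^0$ statement follows from a maximum-principle argument for a flat unitary line bundle. The $H^1$ statement reduces, via Dolbeault and Hodge theory for the flat Hermitian metric on $T$, to the vanishing of certain harmonic $(0,1)$-forms. Write $T=\C^n/\Lambda$. A Hermitian flat line bundle $L$ corresponds to a unitary character $\rho:\Lambda\to U(1)$, and $L$ is trivial exactly when $\rho$ is trivial. Note that we only use that $L$ itself is non-trivial; the hypothesis that $L$ is non-torsion is stronger than needed.

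\textbf{Vanishing of $H^0$.} A holomorphic section $s$ of $L$ has a pointwise norm $|s|$ with respect to the flat unitary metric, and $|s|$ is a well-defined continuous function on the compact manifold $T$. Lift $s$ to $\C^n$. There it becomes a holomorphic function $f$ with $f(z+\gamma)=\rho(\gamma)f(z)$ for all $\gamma\in\Lambda$. Since $|\rho(\gamma)|=1$, the function $|f|$ is $\Lambda$-periodic, hence bounded, so $f$ is constant by Liouville's theorem. If $f\neq 0$, the transformation rule forces $\rho\equiv 1$, so $L$ is trivial, a contradiction. Hence $f=0$ and $H^0(T,L)=0$.

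\textbf{Vanishing of $H^1$.} By Dolbeault's theorem, $H^1(T,L)\cong H^{0,1}_{\dbar}(T,L)$. Equip $T$ with the flat Kähler metric and $L$ with its flat Hermitian metric. Then the Chern connection of $L$ is flat, its curvature is zero, and by Hodge theory every class has a unique $\dbar$-harmonic representative. We choose the global parallel frame $d\bar z_1,\dots,d\bar z_n$ of $\Omega^{0,1}_T$. In this frame a $(0,1)$-form with values in $L$ is $\sum_i u_i\, d\bar z_i$, where each $u_i$ is a smooth section of $L$. Because $T$ is flat, the Laplacian acts diagonally on these coefficients: $\Delta_{\dbar}$ on $L$-valued $(0,q)$-forms acts componentwise as the $\dbar$-Laplacian on sections of $L$.

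A harmonic form therefore has each coefficient $u_i$ harmonic, which on the compact manifold $T$ means $\dbar u_i=0$. So each $u_i$ lies in $H^0(T,L)=0$, and the harmonic space vanishes. This gives $H^1(T,L)=0$.

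\textbf{Alternative via Fourier series.} One can also argue directly with Fourier expansions. Sections of $L$ lift to quasi-periodic functions, which expand in characters $e^{2\pi\sqrt{-1}\,\mathrm{Re}\langle \xi, z\rangle}$ with frequencies $\xi$ in a translate $\xi_0+\Lambda^*$ of the dual lattice. Non-triviality of $\rho$ is equivalent to $\xi_0\notin\Lambda^*$, so $0$ is never a frequency. The $\dbar$-Laplacian acts on each mode by multiplication by a positive multiple of $|\xi|^2$, which is then bounded below by a positive constant. Hence the Laplacian is invertible, and all cohomology of $L$ vanishes in every degree.

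\textbf{Main obstacle.} The one step needing care is the claim that $\Delta_{\dbar}$ acts componentwise with respect to $d\bar z_i$. This holds by the Bochner–Kodaira–Nakano identity, since both the curvature of $L$ and the curvature of $T$ vanish. The Fourier-series argument above gives a route that avoids this identity entirely.
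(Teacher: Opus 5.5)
Your proof is correct, but it takes a different route from the paper.

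The paper argues topologically. It invokes the Hodge decomposition for the unitary local system, $H^k(T,\C_\chi)\simeq\bigoplus_{p+q=k}H^{p,q}(T,L)$. This exhibits $H^0(T,L)$ and $H^1(T,L)=H^{0,1}(T,L)$ as direct summands of $H^0(T,\C_\chi)$ and $H^1(T,\C_\chi)$. It then kills the latter: it writes $T\simeq(S^1)^{2n}$ and $\chi=\prod_i\chi_i$, and notes that a nontrivial factor has $H^0(S^1,\C_{\chi_{i_0}})=0$ and, by Poincar\'e--Verdier duality, $H^1(S^1,\C_{\chi_{i_0}})=0$. The K\"unneth formula finishes the argument.

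You stay on the Dolbeault side throughout. Liouville's theorem disposes of $H^0$. The explicit form of $\Delta_{\dbar}$ on a flat torus, with a flat unitary frame, turns a harmonic $L$-valued $(0,1)$-form into $n$ holomorphic sections of $L$, so the vanishing of $H^1$ reduces to that of $H^0$.

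What your route buys is self-containedness. You need only the Hodge theorem for $\dbar$ on a single Hermitian bundle, not the Hodge decomposition for flat unitary bundles (which rests on the Kähler identities) nor the K\"unneth formula with twisted coefficients. The same componentwise argument, applied to $(0,q)$-forms or via your Fourier series, gives $H^q(T,L)=0$ for every $q$ at no extra cost. The paper's route, in exchange, shows the vanishing is a purely topological property of $\C_\chi$. It yields $H^{p,q}(T,L)=0$ for all $p,q$ at once, with no explicit formula for the Laplacian. As you observed, both proofs use only that $\chi$ is nontrivial.

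One minor remark concerns your justification of the componentwise action of $\Delta_{\dbar}$. It is best justified directly: in linear coordinates and a locally constant unitary frame of $L$, $\dbar^*\bigl(\sum_i u_i\,d\bar z_i\bigr)$ is a negative constant multiple of $\sum_i\partial_{z_i}u_i$. Then $\Delta_{\dbar}$ acts on each coefficient as a positive multiple of $-\sum_j\partial_{z_j}\partial_{\bar z_j}$, which is the Weitzenb\"ock-type Bochner--Kodaira formula with vanishing curvature terms. Your Fourier argument relies on this same computation rather than avoiding it.
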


\begin{proof}
Let $\C_\chi$ be the rank one local system corresponding to the character $\chi$ associated with the flat line bundle $L$.
The standard argument in Hodge theory for Hermitian flat bundles shows that for every $k$, \[ H^k(T,\C_\chi) \simeq \bigoplus_{p+q=k} H^{p,q}(T,L). \] In particular, it is enough to prove that \[ H^0(T,\C_\chi)=H^1(T,\C_\chi)=0. \] We now compute the cohomology of $\C_\chi$ by topological methods. Since $T$ is a complex torus, it is diffeomorphic to $(S^1)^{2n}$, and in particular $\pi_1(T)\simeq \Z^{2n}$. The character $\chi$ is thus a homomorphism $\Z^{2n} \to \C^*$, which can be written as a product of characters \[ \chi=\prod_{i=1}^{2n}\chi_i, \] where each $\chi_i$ is a character of $\Z$. The assumption that $L$ is non-torsion means that $\chi$ is nontrivial, hence there exists an index $i_0$ such that $\chi_{i_0}$ is nontrivial. 

We first show that $ H^0(S^1,\C_{\chi_{i_0}})=0. $ Indeed, $H^0(S^1,\C_{\chi_{i_0}})$ is the space of flat sections of the corresponding local system. Such a section is determined by a vector $v \in \C$ that is invariant under the monodromy, namely $\chi_{i_0}(1)v=v$. Since $\chi_{i_0}$ is nontrivial, this implies $v=0$, and therefore $H^0(S^1,\C_{\chi_{i_0}})=0$. Next, we compute $H^1(S^1,\C_{\chi_{i_0}})$ using Poincar\'e duality. Since $\C_{\chi_{i_0}}$ is a rank one local system, its dual local system is $\C_{\chi_{i_0}^{-1}}$. Poincar\'e-Verdier duality for the compact oriented manifold $S^1$ (see e.g. \cite[(3.1.1)]{KS90}) yields a natural isomorphism \[ H^1(S^1,\C_{\chi_{i_0}}) \simeq H^0(S^1,\C_{\chi_{i_0}^{-1}})^{\vee}=0. \] 
Finally, we apply the K\"unneth formula for cohomology with coefficients in local systems on products. Since $T \simeq (S^1)^{2n}$ and $\C_\chi$ decomposes as the exterior tensor product of the local systems $\C_{\chi_i}$, the K\"unneth formula gives \[ H^k(T,\C_\chi) \simeq \bigoplus_{k_1+\cdots+k_{2n}=k} \bigotimes_{i=1}^{2n} H^{k_i}(S^1,\C_{\chi_i}). \] Because one factor, namely $H^0(S^1,\C_{\chi_{i_0}})$ and $H^1(S^1,\C_{\chi_{i_0}})$, vanishes, all summands corresponding to $k=0$ and $k=1$ vanish. Hence \[ H^0(T,\C_\chi)=H^1(T,\C_\chi)=0. \] This completes the proof.
\end{proof}
\begin{remark}
\label{hyperell}
In dimension $n=2$, a hyperelliptic manifold $T/G$ is necessarily projective, and
$G$ is necessarily cyclic. In dimension $n=3$, the only examples with
non-Abelian $G$ occur when $G$ is the dihedral group of order $8$, as classified in \cite{UY76} and \cite{CD20}.
In particular, note that $G$ need not be cyclic or Abelian.
\end{remark}

Recall that the cohomology of Hopf manifolds of generic or classical type was computed by Mall in \cite{Mall91}. Cohomology computations for certain non-primary Hopf manifolds were later obtained in \cite{LZ04, GZ06}.
\begin{definition}
\label{generic}
	A Hopf manifold $X$ is called of generic type if it is generated by a
	contraction of the type $\varphi: (z_1
	,\cdots,z_n) \to (\alpha_1 z_1, \cdots, \alpha_n z_n)$ with $0<|\alpha_1 | \cdots
	< |\alpha_n | <1$ such that $(\C^n \setminus \{0\})/\langle \varphi \rangle$, and there are no relations except trivial ones between the $\alpha_i$ of the
	form
	$$ \prod_{i \in A} \alpha_i^{r_i} = \prod_{j \in \{1, \cdots, n\} \setminus A} \alpha_j^{r_j}, r_i \in \N
	$$
	for any $A \subset \{1, \cdots, n\}$.
\end{definition}
\begin{definition}
\label{classical}
				A Hopf manifold $X$ (of dimension $>3$) is called of classical type if it is generated by a
				contraction of the type $\varphi: (z_1
				,\cdots,z_n) \to (\alpha z_1, \cdots, \alpha z_n)$ with $0
				< |\alpha | <1$.
\end{definition}
In the following, we consider a non-Hermitian flat line bundle over a Hopf manifold of generic or classical type, corresponding to $\beta \in \C^*$ satisfying the following irrational condition, in order to apply \cite[Theorem 3.3]{SW25}. 
\begin{definition}
\label{irrational}
Let $X$ be a Hopf manifold of generic or classical type.
Let $L$ be a non-Hermitian flat line bundle over $X$ corresponding to $\beta \in \mathbb{C}^*$.

In the classical case, we say that $L$ is irrational if $\beta \notin \Delta_{\alpha}$, that is, if $\beta$ does not belong to the free commutative subgroup of $\mathbb{C}^*$ generated by $\alpha$.

In the generic case, we say that $L$ is irrational if $\beta \alpha_i^{-1} \notin \Delta_{\alpha_1,\ldots,\alpha_n}$ for every $i$.
\end{definition}

A similar condition in the case of Hopf surfaces was studied in \cite{Tsu84}.

Theorem \ref{ueda_quotient} also applies to secondary Hopf manifolds.
Recall that a secondary Hopf manifold is a complex manifold admitting a (primary) Hopf manifold as a finite \'etale covering.
In particular, we can study Hopf manifolds with non-Abelian fundamental groups, whose construction can be found, for example, in \cite{Ka75} and \cite{Hae85}.
\paragraph{}
\textit{Proof of Corollary \ref{corB}:}
Note that the cohomological vanishing conditions in Theorem \ref{ueda_quotient} follow from  \cite[Theorems 1 and 2]{Mall91}.
Corollary \ref{corB} follows directly from Theorem \ref{ueda_quotient} together with \cite[Theorem 3.3]{SW25}.
\qed
\paragraph{}
Note that when the fundamental group is Abelian with cyclic torsion part, the Dolbeault cohomology with values in a flat line bundle is completely computed in \cite{LZ04}.
In our setting, we only require the vanishing of $H^0$ and $H^1$, which allows us to pass to a finite \'etale cover and treat more general secondary Hopf manifolds than those considered in \cite{LZ04}.
\section{Vertical linearisation}

In this section, we study Ueda's problem under finite \'etale covers.
We begin with the case where $C$ is a smooth hypersurface in $M$.
In this situation, a holomorphic foliation is always induced by a holomorphic form. 
We then turn to the higher-rank case, where we focus on the case of hyperelliptic manifold.
We refer the reader to \cite{Bru15} for further background on holomorphic foliations.
\subsection{Preliminary} For the reader’s convenience, we recall some basic facts about holomorphic foliations.
Let $\hat{C}$ be a complex submanifold of a complex manifold $\hat{M}$.

Recall the definition of holomorphic foliation.
\begin{definition}[Holomorphic foliation]
Let $\hat{M}$ be a complex manifold of dimension $n+d$.
A holomorphic foliation $\cF$ of codimension $r$ on $\hat{M}$ is given by a coherent saturated subsheaf
\[
\cF \subset T_{\hat{M}}
\]
of rank $n+d-r$ that is integrable in the sense of Frobenius, i.e. the Lie bracket of germs of sections of $\cF$ satisfies
\[
[\cF, \cF] \subset \cF.
\]
Here, $[\cF, \cF] \subset \cF$ means that for any point $p \in \hat{M}$ and any germs of holomorphic vector fields $X,Y$ tangent to $\cF$ near $p$, the Lie bracket $[X,Y]$ is also tangent to $\cF$ near $p$.
The subsheaf $\cF$ is said to be \emph{saturated} if $N_{\cF}:=T_{\hat{M}}/\cF$ is torsion-free.

The foliation $\cF$ is said to be \emph{regular} if it is a holomorphic subbundle of $T_{\hat{M}}$.
\end{definition}
\begin{definition}[Foliation induced by a holomorphic form]
\label{form-def}
Let $\omega \in H^0(\hat{M}, \Omega^1_{\hat{M}} \otimes L)$ be a nonzero holomorphic $1$-form with values in a line bundle $L$.
Then $\omega$ defines a codimension-one holomorphic foliation $\cF_\omega$ if the following conditions hold:
\begin{enumerate}
    \item \emph{Regularity:} The zero locus of $\omega$ has codimension at least two.
    The associated coherent subsheaf
    \[
    \cF_\omega := \ker\bigl(
    T_{\hat{M}}
    \xrightarrow{\ \lrcorner_\omega\ }
    L
    \bigr)
    \subset T_{\hat{M}}
    \]
    is therefore a rank \((n+d-1)\) coherent subsheaf of \(T_{\hat{M}}\), and defines a holomorphic subbundle on
    $
    \hat{M} \setminus \{\omega = 0\}.
    $

    \item \emph{Involutivity:} The kernel $\cF_\omega$ is involutive, i.e., for any point $p \in \hat{M}$ and any germs of holomorphic vector fields $X,Y \in \cF_\omega$ near $p$, the Lie bracket $[X,Y]$ also lies in $\cF_\omega$.
\end{enumerate}

In this case, the leaves of $\cF_\omega$ are the maximal connected, locally closed  complex submanifolds $\mathcal{L}$ in the complement of the singular set such that $\omega|_{\mathcal{L}} = 0$.
\end{definition}
It is well known that
a codimension-one regular holomorphic foliation is always defined by a twisted holomorphic $1$-form as in Definition \ref{form-def}. 
\begin{remark}
\label{rem3} Recall that a codimension-one regular holomorphic foliation $\cF \subset T_{\hat{M}}$ is equivalent to a nowhere-vanishing global section \[ \omega \in H^0(\hat{M}, \Omega^1_{\hat{M}} \otimes N_{\cF}), \] satisfying $\omega \wedge d\omega = 0$, where $N_{\cF} := T_{\hat{M}}/\cF$ is the normal bundle of the foliation. The section $\omega$ is unique up to multiplication by a nowhere-vanishing holomorphic function. If $\hat{C}$ is a leaf of the foliation, then $\cF|_{\hat{C}} = T_{\hat{C}}$, yielding a natural identification \[ (N_{\cF})|_{\hat{C}} \simeq N_{\hat{C}/\hat{M}}. \] \end{remark}
We want to study sufficient conditions ensuring that, as germs, a regular holomorphic foliation on $\hat{M}$ with $\hat{C}$ as a compact leaf is unique.
In general, $H^0(\hat{M},\, \Omega^1_{\hat{M}} \otimes N_{\cF})$ is not one-dimensional.
In fact, under suitable vanishing conditions on cohomology groups (e.g., when the normal bundle is non-torsion), one can compute the dimension of $H^0(\hat{M},\, \Omega^1_{\hat{M}} \otimes N_{\cF})$.
\paragraph{}
For Corollary \ref{corC}, recall the definition of a holomorphic foliation induced by a holomorphic form in arbitrary codimension.

\begin{definition}
\label{higher-foliation-def}
Let $\hat{M}$ be a complex manifold of dimension $n+d$, and let
\[
\omega \in H^0(\hat{M}, \Omega^d_{\hat{M}} \otimes L)
\]
be a nonzero holomorphic $d$-form with values in a line bundle $L \to \hat{M}$.

Define the \emph{singular set} of $\omega$ by
\[
\mathrm{Sing}(\omega) := \{ p \in \hat{M} \mid \omega(p) = 0 \}.
\]

We say that $\omega$ \emph{defines a codimension-$d$ holomorphic foliation} $\cF$ on $\hat{M}$ if all of the following conditions hold:

\begin{enumerate}
\item (\textbf{Nondegeneracy on the regular set}) On the open dense set
\[
\hat{M}^{\mathrm{reg}} := \hat{M} \setminus \mathrm{Sing}(\omega),
\]
the kernel distribution
\[
\ker(\omega)_p := \{ v \in T_{\hat{M},p} \mid v \;\lrcorner\; \omega(p) = 0 \}
\]
has constant rank $n$.

\item (\textbf{Integrability}) The kernel distribution $\ker(\omega) \subset T_{\hat{M}}|_{\hat{M}^{\mathrm{reg}}}$ is involutive. That is, for any local holomorphic vector fields $X,Y \in \Gamma(U, \ker(\omega))$, we have $[X,Y] \in \Gamma(U, \ker(\omega))$ on $U \subset \hat{M}^{\mathrm{reg}}$.
Equivalently, $\omega$ satisfies the Frobenius integrability condition:
\[
\forall X_1,\dots,X_{d+1} \in \Gamma(U, T_{\hat{M}}), \quad \omega([X_i,X_j], X_1, \dots, \widehat{X_i}, \dots, \widehat{X_j}, \dots, X_{d+1}) = 0.
\]

\item (\textbf{Singular set codimension}) The singular set $\mathrm{Sing}(\omega)$ has codimension at least $2$ in $\hat{M}$.
\end{enumerate}

In fact, one defines a holomorphic foliation
on $\hat{M}$ as the kernel of the morphism $T_{\hat{M}} \to \Omega^{d-1}_{\hat{M}} \otimes L$ given by the contraction with
$\omega$.
\end{definition}
\subsection{Ueda's problem}
In all this subsection, we always assume that $H^0(C, \cO_C)=\C$.
Since $p$ is finite \'etale, this is equivalent to the condition
that $H^0(\hat{C}, \cO_{\hat{C}})=\C$.
For any connected compact complex manifold $C$, we have $H^0(C, \cO_C)=\C$.

Let $\hat{C}$ be a smooth hypersurface in a complex manifold $\hat{M}$, viewed as a germ.
We want to study regular holomorphic foliations on $M$ having $C$ as a compact leaf.
In other words, we consider Ueda's problem for $(M,C)$.

Throughout this section, we assume that there exists a regular holomorphic foliation $\cF$ on $\hat{M}$ having $\hat{C}$ as a leaf, constructed via vertical linearization (see Setting \ref{set} and Remark \ref{rem2}). The main difficulty is to determine when this foliation descends to a regular holomorphic foliation on $M$ having $C$ as a leaf.

Let $\cI_{\hat{C}} \subset \mathcal{O}_{\hat{M}}$ be the ideal sheaf of $\hat{C}$, and consider the infinitesimal neighborhoods of $\hat{C}$.
A global section of
\[
H^0(\hat{M}, \Omega^1_{\hat{M}} \otimes N_{\cF})
\]
is uniquely determined by its restriction to the infinitesimal neighborhoods of $\hat{C}$.
In other words, the natural map
\[
H^0(\hat{M}, \Omega^1_{\hat{M}}\otimes N_{\cF}) \;\longrightarrow\;
\varprojlim_{k \geq 1} H^0\bigl(\hat{M}, \Omega^1_{\hat{M}} / \cI_{\hat{C}}^{k} \Omega^1_{\hat{M}} \otimes N_{\hat{C}/\hat{M}}\bigr)
\]
is injective by the identity theorem.

The $0$-th infinitesimal neighborhood of $\hat{C}$ in $\hat{M}$ is just the
reduced space $\hat{C}$ itself, equipped with the structure sheaf
$
\mathcal{O}_{\hat{M}} / \cI_{\hat{C}} \;\cong\; \mathcal{O}_{\hat{C}} .
$
Restricting the sheaf of holomorphic $1$-forms $\Omega^1_{\hat{M}}$ (which is a holomorphic vector bundle) to $\hat{C}$, we have the
exact sequence
\begin{equation}
\label{equ2}
0 \;\to\; N^*_{\hat{C}/\hat{M}} \;\to\; \Omega^1_{\hat{M}}|_{\hat{C}}
   \;\to\; \Omega^1_{\hat{C}} \;\to\; 0 ,
\end{equation}
where $N^*_{\hat{C}/\hat{M}} = \cI_{\hat{C}} / \cI_{\hat{C}}^2$ is the conormal bundle of $\hat{C}$ in $\hat{M}$.
From now on, we assume that the restricted exact sequence splits and that
\begin{equation}
\label{cohom1}
H^0(\hat{C},N^{-k}_{\hat{C}/\hat{M}})=H^1(\hat{C},N^{-k}_{\hat{C}/\hat{M}})=0,
\end{equation}
\begin{equation}
\label{cohom2}
H^0(\hat{C},\Omega^1_{\hat{C}} \otimes N^{-k}_{\hat{C}/\hat{M}})=H^1(\hat{C},\Omega^1_{\hat{C}} \otimes N^{-k}_{\hat{C}/\hat{M}})=0,
\end{equation}
for all $k \geq 1$.
Assume furthermore that
\begin{equation}
\label{cohom4}
H^0\!\bigl(\hat{C},\, \Omega^1_{\hat{C}} \otimes N_{\hat{C}/\hat{M}} \bigr) \;=\;0.
\end{equation}

The associated long exact sequence for the restricted short exact sequence, after tensoring with $N_{\hat{C}/\hat{M}}$, is
\[
0 \to H^0\!\big(\hat{C},\cO_{\hat{C}}\big)
  \to H^0\!\big(\hat{C}, \Omega^1_{\hat{M}}|_{\hat{C}} \otimes N_{\hat{C}/\hat{M}}\big)
  \to H^0\!\big(\hat{C}, \Omega^1_{\hat{C}}\otimes N_{\hat{C}/\hat{M}}\big)=0
\]
where the vanishing of the last term follows from our cohomology assumption (\ref{cohom4}).

Let us sketch a solution to Ueda's problem via vertical linearisation in the following remarks.
We will concentrate on the construction of twisted holomorphic forms, from which one can easily verify that the associated holomorphic foliations are defined.
\begin{remark}
\label{rem2}
In this remark, we consider the case where \(\hat{C}\) is a smooth hypersurface in the manifold \(\hat{M}\).

Ueda’s construction of the horizontal foliation via vertical linearization can be interpreted as extending the image of the constant function $1$ under the natural inclusion \begin{equation}
\label{cohom5}
H^0\!\big(\hat{C},\cO_{\hat{C}}\big) \subset H^0\!\big(\hat{C}, \Omega^1_{\hat{M}}|_{\hat{C}} \otimes N_{\hat{C}/\hat{M}}\big)
\end{equation}  
to a neighborhood of $\hat{C}$ in $\hat{M}$. This produces a regular holomorphic foliation having $\hat{C}$ as a leaf. More precisely, consider the local coordinates introduced in Setting \ref{set}. Let $(h_j,v_j)$ be local coordinates on $\hat{M}$ such that $\hat{C}$ is locally defined by $\{v_j=0\}$. Then the differentials $dv_j|_{\hat{C}}$ define local sections in $\Gamma(U_j, \Omega^1_{\hat{M}}|_{\hat{C}} \otimes N_{\hat{C}/\hat{M}})$. Since the normal bundle is Hermitian flat, the transition functions take the form \[ \hat\Phi_{kj} = \bigl(\varphi_{kj}(h_j) + \hat\phi^h_{kj}(h_j,v_j), \, t_{kj} v_j\bigr), \] and therefore the sections $dv_j|_{\hat{C}}$ glue to a global section of $H^0\!\big(\hat{C}, \Omega^1_{\hat{M}}|_{\hat{C}} \otimes N_{\hat{C}/\hat{M}}\big)$. On the other hand, each $dv_j|_{\hat{C}}$ can be viewed as a local generator of $\Gamma(U_j, N_{\hat{C}/\hat{M}}^* \otimes N_{\hat{C}/\hat{M}})=\Gamma(U_j,\cO_{\hat{C}})$, and these generators glue to the constant function $1$. This explains the above inclusion.
Recall that $\hat{C}$ is a smooth deformation retract of $\hat{M}$, which induces an isomorphism on fundamental groups. Since $N_{\hat{C}/\hat{M}}$ is Hermitian flat, it extends naturally to a Hermitian flat line bundle on $\hat{M}$. For simplicity, we keep the same notation $N_{\hat{C}/\hat{M}}$ for this extension. Then the differentials $dv_j$ define local sections in $\Gamma(V_j, \Omega^1_{\hat{M}} \otimes N_{\hat{C}/\hat{M}})$, and these glue together to give a global section in \[ H^0(\hat{M}, \Omega^1_{\hat{M}} \otimes N_{\hat{C}/\hat{M}}). \]
The normal bundle of the foliation defined by this global section coincides with $N_{\hat{C}/\hat{M}}$.
\end{remark}
\begin{remark}
\label{rem4}
Similar arguments apply to the case where $N_{\hat{C}/\hat{M}}$ is a vector bundle which splits as a direct sum of Hermitian flat line bundles. Let $\{V_j\}$ be a covering of $\hat{M}$ by charts, and let $v_j$ denote the vertical coordinate on $V_j$. The differential $dv_j$ is then a holomorphic $1$-form on $V_j$ with values in $N_{\hat{C}/\hat{M}}$. Since the transition functions of $N_{\hat{C}/\hat{M}}$ are unitary and diagonal, the change of vertical coordinates is linear and compatible with the flat structure. It follows that the collection $\{dv_j\}$ defines a global section of $\Omega^1_{\hat{M}} \otimes N_{\hat{C}/\hat{M}}$. Moreover, the components obtained in this way are linearly independent along $\hat{C}$. Consequently, \[ h^0(\hat{M}, \Omega^1_{\hat{M}} \otimes N_{\hat{C}/\hat{M}}) \geq \mathrm{rank}(N_{\hat{C}/\hat{M}}). \]
\end{remark}

Let us first consider the case where $N_{\hat{C}/\hat{M}}$ is a line bundle to prove Theorem \ref{ueda_quotient}.

The higher order $k$-th infinitesimal neighborhood of $\hat{C}$ is defined by $\cI_{\hat{C}}^{k+1}$. There is a
short exact sequence:
\[
0 \longrightarrow \cI_{\hat{C}}^k / \cI_{\hat{C}}^{k+1} \longrightarrow \mathcal{O}_{\hat{M}} / \cI_{\hat{C}}^{k+1}
\longrightarrow \mathcal{O}_{\hat{M}} / \cI_{\hat{C}}^k \longrightarrow 0.
\]

Tensoring with $\Omega^1_{\hat{M}} \otimes N_{\cF}$ and restricting to $\hat{C}$, we get a short exact sequence of coherent sheaves
\[
0 \to \Omega^1_{\hat{M}} \otimes N_{\cF}|_{\hat{C}} \otimes N_{\hat{C}/\hat{M}}^{-k} \to
\Omega^1_{\hat{M}} \otimes N_{\cF} \otimes \mathcal{O}_{\hat{M}} / \cI_{\hat{C}}^{k+1} \to
\Omega^1_{\hat{M}} \otimes N_{\cF} \otimes \mathcal{O}_{\hat{M}} / \cI_{\hat{C}}^k \to 0,
\]
since $\cI_{\hat{C}}^k / \cI_{\hat{C}}^{k+1} \simeq N_{\hat{C}/\hat{M}}^{-k}$.

Taking global sections, we obtain the exact sequence
\[
0 \to H^0(\hat{C}, \Omega^1_{\hat{M}}|_{\hat{C}} \otimes N_{\hat{C}/\hat{M}}^{1-k})
\to H^0(\hat{M}, \Omega^1_{\hat{M}} / \cI_{\hat{C}}^{k+1} \Omega^1_{\hat{M}}\otimes N_{\hat{C}/\hat{M}})
\to \]
\[
H^0(\hat{M}, \Omega^1_{\hat{M}} / \cI_{\hat{C}}^{k} \Omega^1_{\hat{M}}\otimes N_{\hat{C}/\hat{M}}) \to H^1(\hat{C}, \Omega^1_{\hat{M}}|_{\hat{C}} \otimes N_{\hat{C}/\hat{M}}^{1-k}).
\]
Note that
\[ (N_{\cF})|_{\hat{C}} \simeq N_{\hat{C}/\hat{M}}. \]

By our cohomology assumptions (\ref{cohom1}) and the splitting of short exact sequence (\ref{equ2}), for all $k \geq 2$,
$$ H^1(\hat{C}, \Omega^1_{\hat{M}}|_{\hat{C}} \otimes N_{\hat{C}/\hat{M}}^{1-k})= H^0(\hat{C}, \Omega^1_{\hat{M}}|_{\hat{C}} \otimes N_{\hat{C}/\hat{M}}^{1-k})=0$$
which implies that
$$
H^0(\hat{M}, \Omega^1_{\hat{M}} / \cI_{\hat{C}}^{k+1} \Omega^1_{\hat{M}}\otimes N_{\hat{C}/\hat{M}})
\simeq H^0(\hat{M}, \Omega^1_{\hat{M}} / \cI_{\hat{C}}^{k} \Omega^1_{\hat{M}}\otimes N_{\hat{C}/\hat{M}}).
$$
For $k=1$, since  from our cohomology assumption (\ref{cohom4}) and the splitting of short exact sequence (\ref{equ2}),
\[
h^0\!\big(\hat{C}, \Omega^1_{\hat{M}}|_{\hat{C}} \otimes N_{\hat{C}/\hat{M}}\big) = 1.
\]
Assume that a vertical linearisation exists near \(\hat{C}\), and that the foliation \(\cF\) is defined as in Remark \ref{rem2}.
Then by Remark \ref{rem2} (in particular (\ref{cohom5})), the factorization
\[
H^0(\hat{M}, \Omega^1_{\hat{M}} \otimes N_{\cF})
\;\longrightarrow\;
H^0\big(\hat{M}, \Omega^1_{\hat{M}} / \cI_{\hat{C}}^{2} \Omega^1_{\hat{M}} \otimes N_{\hat{C}/\hat{M}}\big)
\;\longrightarrow\;
H^0\big(\hat{M}, \Omega^1_{\hat{M}} / \cI_{\hat{C}} \Omega^1_{\hat{M}} \otimes N_{\hat{C}/\hat{M}}\big)
\]
implies that the map
\[
H^0\big(\hat{M}, \Omega^1_{\hat{M}} / \cI_{\hat{C}}^{2} \Omega^1_{\hat{M}} \otimes N_{\hat{C}/\hat{M}}\big)
\;\longrightarrow\;
H^0\big(\hat{M}, \Omega^1_{\hat{M}} / \cI_{\hat{C}} \Omega^1_{\hat{M}} \otimes N_{\hat{C}/\hat{M}}\big)
\]
is surjective.

Note that, by our cohomology assumptions (\ref{cohom1}),
\[
H^0(\hat{C}, \Omega^1_{\hat{M}}|_{\hat{C}}) = H^0(\hat{C}, \Omega^1_{\hat{C}})
\]
which is the kernel of 
\[
H^0\big(\hat{M}, \Omega^1_{\hat{M}} / \cI_{\hat{C}}^{2} \Omega^1_{\hat{M}} \otimes N_{\hat{C}/\hat{M}}\big)
\;\longrightarrow\;
H^0\big(\hat{M}, \Omega^1_{\hat{M}} / \cI_{\hat{C}} \Omega^1_{\hat{M}} \otimes N_{\hat{C}/\hat{M}}\big).
\]

Moreover,
since $H^0\big(\hat{C}, (\Omega^1_{\hat{M}} \otimes N_{\cF})|_{\hat{C}}\big) \simeq \C$,
the restriction map
\[
H^0(\hat{M}, \Omega^1_{\hat{M}} \otimes N_{\cF}) \;\longrightarrow\;
H^0\big(\hat{C}, (\Omega^1_{\hat{M}} \otimes N_{\cF})|_{\hat{C}}\big)
\]
is surjective by Remark \ref{rem2}.
In conclusion, $H^0(\hat{M}, \Omega^1_{\hat{M}} \otimes N_{\cF})$ has dimension $h^{1,0}(\hat{C}) + 1$.

We emphasize that, in general, $H^0(\hat{M}, \Omega^1_{\hat{M}} \otimes N_{\cF})$ is not 1 dimensional.
In particular, the corresponding holomorphic form may not be invariant under the action of the Galois group $G$.
For our purposes, we aim to construct a $G$-invariant nowhere-vanishing section in $H^0(\hat{M}, \Omega^1_{\hat{M}} \otimes N_{\cF})$. Such a section descends to a nowhere-vanishing section on $M$, and hence, by Remark \ref{rem3}, is equivalent to defining a regular holomorphic foliation on $M$.

Our observation is that  the kernel of the natural map
$$
H^0(\hat{M}, \Omega^1_{\hat{M}}\otimes N_{\cF}) \to
\varprojlim_{k \geq 2}   H^0(\hat{M}, \Omega^1_{\hat{M}} / \cI_{\hat{C}}^{k} \Omega^1_{\hat{M}}\otimes N_{\hat{C}/\hat{M}})
$$
which is 
\[
H^0\!\big(\hat{C}, \Omega^1_{\hat{M}}|_{\hat{C}} \otimes N_{\hat{C}/\hat{M}}\big),
\]
is of dimension 1.
Using this, we can give a sufficient condition for the existence of a foliation on $M$ induced from a $G$-invariant foliation on $\hat{M}$.
\paragraph{}
\textit{Proof of Theorem \ref{ueda-quotient2}:}
It is enough to show that Ueda's construction of the foliation on $\hat{M}$, defining a global section of
\[
H^0(\hat{M}, \Omega^1_{\hat{M}} \otimes N_{\hat{C}/\hat{M}})
\]
as in Remark \ref{rem2}
descends to a global section of
\[
s \in H^0(M, \Omega^1_{M} \otimes N_{C/M}).
\]
Before proving this, we explain why it is sufficient to complete the proof of Theorem \ref{ueda-quotient2}.
Since $p$ is finite \'etale, the descended section is nowhere-vanishing. By Remark \ref{rem3}, it therefore defines a regular holomorphic foliation on $M$. Moreover, since $p^*s|_{\hat{C}}=0$, we obtain $s|_{C}=0$. Hence $C$ is invariant and is contained in a leaf $\mathcal{L}$. Because the foliation is regular, $\mathcal{L}$ is a locally closed complex submanifold. Since $C$ is a complex submanifold of the same dimension as $L$ and is tangent to the foliation, the inclusion $C \subset \mathcal{L}$ is locally biholomorphic. Therefore, $C$ is an open subset of $\mathcal{L}$. To conclude the equality, we pass to the finite étale cover.
Since $p$ is \'etale, $p^{-1}(\mathcal{L})$ is locally closed submanifold of $\hat{M}$. It is invariant in $\hat{M}$ and hence each connected component of $p^{-1}(\mathcal{L})$ is contained in some leaf of $\hat{M}$.
Since $C$ is an open subset of $\mathcal{L}$, we have $\hat{C}=p^{-1}(C)$, which is an open subset of $p^{-1}(\mathcal{L})$. In particular, $\hat{C}$ is contained in a connected component of $p^{-1}(\mathcal{L})$ and is open in it. As this component is contained in a leaf and $\hat{C}$ itself is a leaf, it follows that this component coincides with $\hat{C}$.
Since the covering is Galois, any other connected component is obtained from this one by the action of some element of the Galois group.
Since $\hat{C}$ is invariant under the action of the Galois group,
$p^{-1}(\mathcal{L})=\hat{C}$.
We conclude that $C=\mathcal{L}$.
In other words, we have shown that $C$ is a leaf of the induced foliation.
The regularity and involutivity are stable under finite \'etale cover,
thus the section $s$ defines a regular holomorphic foliation on $M$ and this completes the proof of Theorem \ref{ueda-quotient2}.

Consider the kernel of the map
\[
H^0(\hat{M}, \Omega^1_{\hat{M}}\otimes N_{\hat{C}/\hat{M}}) \;\longrightarrow\;
\varprojlim_{k \geq 2} H^0\big(\hat{M}, \Omega^1_{\hat{M}} / \cI_{\hat{C}}^{k} \Omega^1_{\hat{M}} \otimes N_{\hat{C}/\hat{M}}\big),
\]
with generator
\[
\sigma \in
H^0\!\big(\hat{C}, \Omega^1_{\hat{M}}|_{\hat{C}} \otimes N_{\hat{C}/\hat{M}}\big).
\]

Since both sides are $G$-invariant, for any $g \in G$, $g^* \sigma$ is also in the kernel and, by the dimension assumption, is a multiple of $\sigma$.
Thus there exists a complex character $\chi \in \hat{G}$ such that
\[
g^* \sigma = \chi(g) \sigma.
\]
In fact, for any $g_1, g_2 \in G$, we have \[ g_2^* g_1^* \sigma = \chi(g_1)\chi(g_2)\sigma = (g_1 g_2)^* \sigma = \chi(g_1 g_2)\sigma. \] This shows that \[ \chi(g_1)\chi(g_2) = \chi(g_1 g_2). \]

Now by our cohomology assumption (\ref{cohom4}) and the splitting of short exact sequence (\ref{equ2}), consider 
\[
\sigma|_{\hat{C}} \in H^0(\hat{M}, \Omega^1_{\hat{M}}|_{\hat{C}} \otimes N_{\hat{C}/\hat{M}}) = H^0(\hat{C}, \cO_{\hat{C}})=\C,
\]
which implies that $\chi$ is the trivial character.
In other words, $\sigma$ is $G$-invariant and thus descends to a nontrivial global section of
\[
s\in H^0(M, \Omega^1_{M} \otimes N_{C/M}).
\]
\qed
\paragraph{}
Since full linearization implies vertical linearization, we will not discuss Ueda's problem for secondary Hopf manifolds (see Corollary \ref{corB}).
However, as an application, we can study Ueda's problem for hyperelliptic manifolds as follows.
\begin{corollary}
\label{cor1}
Let $C$ be a hyperelliptic manifold of dimension $n$
embedded in a complex
manifold $M$ of dimension $n + 1$.  Suppose that the normal bundle of $C$ in $M$ admits transition functions that are locally constant Hermitian matrices.
Let $p:\hat{C} \to C$  be a finite \'etale Galois cover such that $\hat{C}$ is a complex torus and assume that the induced normal bundle $N_{\hat{C}/\hat{M}}$ satisfies the strongly vertically Diophantine condition (see Definition \ref{dioph}).
Then a neighborhood of $C$ in $M$ admits a regular holomorphic foliation with $C$ as a leaf.
\end{corollary}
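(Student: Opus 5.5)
The plan is to deduce Corollary \ref{cor1} from Theorem \ref{ueda-quotient2}, applied to the cover $p:\hat{C}\to C$ with $\hat{C}$ a complex torus. The hypotheses then split into four groups: the analytic input (existence of a vertical linearisation on $\hat{C}\subset\hat{M}$), the splitting of (\ref{equ2}), the vanishing conditions (\ref{cohom1}) and (\ref{cohom2}), and the vanishing (\ref{cohom4}). The condition $H^0(C,\cO_C)=\C$ is automatic since $C$ is compact and connected. As in the proof of Theorem \ref{ueda_quotient}, the pullback $N_{\hat{C}/\hat{M}}=p^*N_{C/M}$ is again Hermitian flat. Here $d=1$, and $\pi_1(\hat{C})\simeq\Z^{2n}$, so it is a unitary flat line bundle given by a character $\chi$.

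\emph{Step 1 (vertical linearisation upstairs).} The strongly vertically Diophantine condition (Definition \ref{dioph}) is exactly the hypothesis of the main theorem of \cite{SW24}. That theorem gives holomorphic charts on a neighborhood of $\hat{C}$ in $\hat{M}$ whose transition functions have the form $(\var_{kj}(h_j)+\hat\phi^h_{kj}(h_j,v_j),\,t_{kj}v_j)$. This is the vertical linearisation required in Setting \ref{set}, and it produces the foliation of Remark \ref{rem2}.

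\emph{Step 2 (splitting and vanishing).} With $d=1$, $\mu_l^Q=\mu_{l,1}^{q}$ for $q=|Q|$. Taking $P=0$ in (\ref{dv}) gives $|\mu_{l,1}^{q}-\mu_{l,1}|>D/q^\tau$ for every $q\ge2$. Hence $\mu_{l,1}^{q-1}\neq 1$ for some $l$, so $N_{\hat{C}/\hat{M}}^{m}$ is nontrivial for every $m\ge1$, i.e. $N_{\hat{C}/\hat{M}}$ is non-torsion. Lemma \ref{cohom_torus} then gives $H^0=H^1=0$ for $N^{-k}_{\hat{C}/\hat{M}}$ and for $N_{\hat{C}/\hat{M}}$ itself, with $k\ge1$. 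Since $\Omega^1_{\hat{C}}\simeq\cO_{\hat{C}}^{\oplus n}$ on a torus, this yields (\ref{cohom1}), (\ref{cohom2}) and (\ref{cohom4}). The obstruction to splitting (\ref{equ2}) lies in $H^1(\hat{C},T_{\hat{C}}\otimes N^*_{\hat{C}/\hat{M}})\simeq H^1(\hat{C},N^{-1}_{\hat{C}/\hat{M}})^{\oplus n}=0$, so the sequence splits. Alternatively, the splitting can be read off directly from the linearised charts, since $dv_j$ is globally well defined along $\hat{C}$.

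\emph{Step 3.} With all hypotheses verified, Theorem \ref{ueda-quotient2} yields a regular holomorphic foliation on a neighborhood of $C$ in $M$ with $C$ as a leaf.

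The main obstacle I expect is Step 1: matching the precise hypotheses of \cite{SW24} with the setting here. In particular one must check that the pullback to $\tilde{C}\simeq(\C^*)^n$ and the normalisation of the lattice with multipliers $\lambda_{j,k},\mu_{j,1}$ are compatible with taking first the cover $\hat{C}\to C$ and then the universal-type cover of $\hat{C}$. A secondary point is the non-torsion deduction. If $P=0$ is excluded by the way the Diophantine estimate is formulated, I would instead use $P$ arbitrary together with the remark that the Diophantine condition forces $\mu_{l,1}^{q-1}\notin\{\lambda_l^{-P}\}$, which in particular rules out $\mu_{l,1}^{q-1}=1$.
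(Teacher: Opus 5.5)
Your route is the same as the paper's. You verify the hypotheses of Theorem \ref{ueda-quotient2} via non-torsion of $N_{\hat{C}/\hat{M}}$, triviality of $\Omega^1_{\hat{C}}$ and Lemma \ref{cohom_torus}, and take the vertical linearisation upstairs from \cite{SW24}. However, the one step you spell out in detail, the deduction that $N_{\hat{C}/\hat{M}}$ is non-torsion, does not work as written.

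Taking $P=0$ in (\ref{dv}) only gives $\mu_{l,1}^{m}\neq 1$, and this does not show that $N^{m}_{\hat{C}/\hat{M}}$ is nontrivial. In the normalisation behind Definition \ref{dioph}, the generators $e_1,\dots,e_n$ act trivially on the fibre of $(\C^*)^n\times\C$. So the $\mu_{l,1}$ are in general not the values of the unitary character $\chi$ of Lemma \ref{cohom_torus}; they need not even have modulus one. They are determined by the holomorphic bundle only up to factors $\lambda_l^P$. Concretely, a section of the bundle with multipliers $\nu_l$ is a Laurent series $f=\sum_{P\in\Z^n}f_P h^P$ on $(\C^*)^n$ with $f(T_l h)=\nu_l f(h)$, i.e. $f_P(\lambda_l^P-\nu_l)=0$ for all $l$. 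Hence the bundle is trivial as soon as $\nu_l=\lambda_l^P$ for all $l$ and some $P\in\Z^n$, even if $\nu_l\neq 1$.

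The fix is to use (\ref{dv}) for every $P$. With $Q=m+1\geq 2$ it gives $\mu_{l,1}^{m}\neq\lambda_l^{P}$ for all $P\in\Z^n$ and all $l$. Therefore $H^0(\hat{C},N^{m}_{\hat{C}/\hat{M}})=0$, so $N^{m}_{\hat{C}/\hat{M}}\not\simeq\cO_{\hat{C}}$ for every $m\geq 1$. Your fallback sentence actually derives $\mu_{l,1}^{q-1}\notin\{\lambda_l^{-P}\}$, which is exactly what is needed, but then keeps only the weaker consequence $\mu_{l,1}^{q-1}\neq 1$. Only when $\chi$ is trivial on $e_1,\dots,e_n$, so that the $\mu_{l,1}$ are unimodular, does $P=0$ suffice. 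In that case $|\lambda_l^P|=1$ for all $l$ forces $P=0$, because the matrix $(\mathrm{Im}\,\tau_{li})$ is invertible.

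Also drop the ``alternatively'' justification of the splitting of (\ref{equ2}). The forms $dv_j|_{\hat{C}}$ glue for any admissible charts, and they only recover the canonical inclusion $N^*_{\hat{C}/\hat{M}}\hookrightarrow\Omega^1_{\hat{M}}|_{\hat{C}}$ (the element $1$ in (\ref{cohom5})), not a retraction. A splitting would require the $dh_j|_{\hat{C}}$ to glue. Your cohomological argument via $H^1(\hat{C},T_{\hat{C}}\otimes N^*_{\hat{C}/\hat{M}})=0$ is the correct one, and it is what the paper uses.
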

\begin{proof}
 Note that the strongly vertically Diophantine condition (more precisely, the non resonance condition) implies that the normal line bundle
 $N_{\hat{C}/\hat{M}}$
is non-torsion.
Note that the tangent or cotangent bundle of $\hat{C}$ is trivial.
This implies by Lemma \ref{cohom_torus} that the short exact sequence
$$
0 \;\to\; N^*_{\hat{C}/\hat{M}} \;\to\; \Omega^1_{\hat{M}}|_{\hat{C}}
   \;\to\; \Omega^1_{\hat{C}} \;\to\; 0 ,
$$ splits and that
$$
H^0(\hat{C},N^{-k}_{\hat{C}/\hat{M}})=H^1(\hat{C},N^{-k}_{\hat{C}/\hat{M}})=0,
$$
$$
H^0(\hat{C},\Omega^1_{\hat{C}} \otimes N^{-k}_{\hat{C}/\hat{M}})=H^1(\hat{C},\Omega^1_{\hat{C}} \otimes N^{-k}_{\hat{C}/\hat{M}})=0,
$$
for all $k \geq 1$.
Moreover, Lemma \ref{cohom_torus} also implies that
$$
H^0\!\bigl(\hat{C},\, \Omega^1_{\hat{C}} \otimes N_{\hat{C}/\hat{M}} \bigr) \;=\;
0.
$$
The conclusion follows from Theorem \ref{ueda-quotient2} and \cite[Theorem 1.1]{SW24}.
We remark that the proof of \cite[Theorem 1.1]{SW24} is obtained via the vertical linearisation stated in the assumption of Theorem \ref{ueda-quotient2}, which is stronger than the conclusion that Ueda's problem admits a solution.
\end{proof}


%
In general, when the normal bundle is a Hermitian flat vector bundle, there is no explicit general formula for the cohomology with coefficients in this bundle. To simplify the presentation, we restrict to the case where $\hat{C}$ is a complex torus and the normal bundle splits as a direct sum of Hermitian flat line bundles.
\paragraph{}
\textit{Proof of Corollary \ref{corC}:}
As before, consider the kernel of
$$
H^0(\hat{M}, \Omega^1_{\hat{M}}\otimes N_{\hat{C}/\hat{M}}) \to
\varprojlim_{k \geq 2}   H^0(\hat{M}, \Omega^1_{\hat{M}} / \cI_{\hat{C}}^{k} \Omega^1_{\hat{M}}\otimes N_{\hat{C}/\hat{M}}).
$$
Denote this kernel by $N$.

In the higher-codimension case, the normal bundle $N_{\hat{C}/\hat{M}}$ (as a Hermitian flat vector bundle over a complex torus) is a direct sum of Hermitian flat line bundles.
(Note that the normal bundle  $N_{C/M}$ is not necessarily a direct sum of Hermitian flat line bundles.)
The strongly vertically Diophantine condition \ref{dioph} (more precisely, the non resonance condition) then implies that
$$\Omega^1_{\hat{M}}|_{\hat{C}} \simeq  N_{\hat{C}/\hat{M}}^* \oplus  \Omega^1_{\hat{C}}.$$
Moreover, we have
\[
H^0\!\bigl(\hat{C},\,\Omega^1_{\hat{M}}|_{\hat{C}} \otimes N_{\hat{C}/\hat{M}}  \otimes \Sym^k N_{\hat{C}/\hat{M}}^\ast \bigr) \;=\;
H^1\!\bigl(\hat{C},\, \Omega^1_{\hat{M}}|_{\hat{C}} \otimes N_{\hat{C}/\hat{M}}  \otimes\Sym^k N_{\hat{C}/\hat{M}}^\ast \bigr) \;=\; 0,
\]
for all $k \geq 2$.
The same cohomological calculation as above works for the \(k\)-th infinitesimal neighborhood of \(\hat{C}\) for all \(k \geq 2\).

We claim that the restriction
$$
H^0(\hat{M}, \Omega^1_{\hat{M}}\otimes N_{\hat{C}/\hat{M}}) \to
   H^0(\hat{M}, \Omega^1_{\hat{M}} / \cI_{\hat{C}}^{} \Omega^1_{\hat{M}}\otimes N_{\hat{C}/\hat{M}})
$$
is surjective.
Note that
$$ H^0(\hat{M}, \Omega^1_{\hat{M}} / \cI_{\hat{C}}^{} \Omega^1_{\hat{M}}\otimes N_{\hat{C}/\hat{M}}) \simeq  H^0(\hat{C}, \Omega^1_{\hat{M}}|_{\hat{C}} \otimes N_{\hat{C}/\hat{M}}).$$
In fact,
the strongly vertically Diophantine condition \ref{dioph} (more precisely, the non resonance condition) then implies that
\[
H^0\big(\hat{C}, \Omega^1_{\hat{M}}|_{\hat{C}} \otimes N_{\hat{C}/\hat{M}}\big) = \C^d.
\]
By Remark \ref{rem4},  the restriction
$$
H^0(\hat{M}, \Omega^1_{\hat{M}}\otimes N_{\hat{C}/\hat{M}}) \to
   H^0(\hat{M}, \Omega^1_{\hat{M}} / \cI_{\hat{C}}^{} \Omega^1_{\hat{M}}\otimes N_{\hat{C}/\hat{M}})
$$
is hence surjective.

In conclusion, by the same cohomological calculation as in the proof of Theorem \ref{ueda-quotient2} presented above,  the restriction map induces a linear isomorphism \[ N \longrightarrow H^0(\hat{C}, N^*_{\hat{C}/\hat{M}} \otimes N_{\hat{C}/\hat{M}}) \subset H^0(\hat{C}, \Omega^1_{\hat{M}}|_{\hat{C}} \otimes N_{\hat{C}/\hat{M}}), \] whose dimension is $d$ in our situation. Moreover, this isomorphism is $G$-equivariant. On the other hand, the $d$ holomorphic $1$-forms obtained Remark \ref{rem4} are not necessarily $G$-invariant.

Observe that the wedge product of these holomorphic forms defines a global section of
\[
H^0(\hat{M}, \Omega^d_{\hat{M}}\otimes \det(N_{\hat{C}/\hat{M}})).
\]
We use this section to construct a foliation as in Definition \ref{higher-foliation-def}.

In local coordinates, one can easily check that this wedge product defines a regular codimension-$d$ holomorphic foliation.
We claim that it is $G$-invariant.
Thus it descends to a global section of
\[
H^0(M, \Omega^d_M \otimes \det(N_{C/M})),
\]
which defines a foliation as in Definition \ref{higher-foliation-def}.
This foliation is regular and has $C$ as a leaf. This follows from the same argument as in the proof of Corollary \ref{cor1}.

The $G$-invariance follows from the following alternative interpretation of this section.
Since $N^*_{\hat{C}/\hat{M}}\otimes N_{\hat{C}/\hat{M}}$ is isomorphic to
\[
\mathrm{Hom}(N_{\hat{C}/\hat{M}}, N_{\hat{C}/\hat{M}}),
\]
the wedge product can be regarded as a global section
\[
H^0\big(\hat{C}, \mathrm{Hom}(\det(N_{\hat{C}/\hat{M}}), \det(N_{\hat{C}/\hat{M}}))\big) = H^0(\hat{C}, \cO_{\hat{C}}),
\]
in which every global section is $G$-invariant.
\qed
 
Department of Mathematics, Graduate School of Science, Osaka Metropolitan University, 3-3-138 Sugimoto, Osaka 558-8585, Japan.\\
Email address: xiaojun.wu@univ-cotedazur.fr; y25161q@omu.ac.jp.  
\end{document}